\let\mathcal\mathscr
\numberwithin{equation}{section}
\renewcommand{\phi}{\varphi}
\newcommand{\ZZ}{\mathbb{Z}}
\newcommand{\NN}{\mathbb{N}}
\newcommand{\RR}{\mathbb{R}}
\newcommand{\Z}{\mathbb{Z}}
\newcommand{\cF}{\mathcal{F}}
\renewcommand{\leq}{\leqslant}
\renewcommand{\geq}{\geqslant}
\renewcommand{\bar}{\overline}
\newcommand{\x}{\mathbf{x}}
\newcommand{\y}{\mathbf{y}}
\renewcommand{\v}{\mathbf{v}}
\newcommand{\z}{\mathbf{z}}
\renewcommand{\a}{\mathbf{a}}
\renewcommand{\k}{\mathbf{k}}
\renewcommand{\j}{\mathbf{j}}
\newcommand{\bfx}{\mathbf{x}}
\newcommand{\calD}{\mathcal{D}}
\newcommand{\Del}{\Delta}
\newcommand{\N}{\mathbb{N}}
\newcommand{\tet}{\theta}
\newcommand{\ve}{\varepsilon}
\DeclareMathOperator{\supp}{supp}
\DeclareMathOperator{\dist}{dist}
\renewcommand{\t}{\mathbf{t}}
\renewcommand{\hat}{\widehat}
\newcommand{\calM}{\mathcal{M}}
\newtheorem{thm}{Theorem}[section]
\newtheorem{cor}[thm]{Corollary}
\newtheorem{lem}[thm]{Lemma}
\newtheorem{prop}[thm]{Proposition}
\newtheorem{assp}[thm]{Assumption}
\newtheorem{cond}[thm]{Condition}
\theoremstyle{definition}
\newtheorem{rem}[thm]{Remark}
\numberwithin{equation}{section}
\newcommand{\eps}{\varepsilon}
\newtheorem{theorem}{Theorem}
\newtheorem{conjecture}[theorem]{Conjecture}
\newcommand{\del}{{\delta}}
\newcommand{\R}{{\mathbb{R}}}
\newcommand{\calN}{\mathcal{N}}
\newcommand{\calG}{\mathcal{G}}
\newcommand{\calF}{\mathcal{F}}
\newcommand{\vareps}{{\varepsilon}}
\newcommand{\kap}{\kappa}
\newcommand{\bft}{\mathbf{t}}
\newenvironment{blue}{\color{blue}}{}
\newenvironment{yellow}{\color{yellow}}{}
\newenvironment{red}{\color{red}}{}
\begin{document}
\date{\today}

\title[Density of rational points near/on compact manifolds]{Density of rational points near/on compact manifolds with certain curvature conditions}

\author{Damaris Schindler}

\address{Mathematisches Institut\\
Georg-August Universi{\"a}t G{\"o}ttingen\\
Bunsenstrasse 3-5\\
37073 G{\"o}ttingen\\
Germany}
\email{damaris.schindler@mathematik.uni-goettingen.de}

\author{Shuntaro Yamagishi}
\address{Mathematisch Instituut\\
Universiteit Utrecht\\
Budapestlaan 6\\
NL-3584CD Utrecht\\
The Netherlands}
\email{s.yamagishi@uu.nl}

\thanks{2010  {\em Mathematics Subject Classification.} 11D75 (11J13, 11J25, 11J83, 11K60)}

\begin{abstract}
In this article we establish an asymptotic formula for the number of rational points, with bounded denominators, within a given distance to a compact submanifold $\calM$ of $\R^M$ with a certain curvature condition. Our result generalises earlier work of Huang for hypersurfaces
[J.-J. Huang, The density of rational points near hypersurfaces, Duke Math. J. 169 (2020), 2045--2077.],
as our curvature condition reduces to Gaussian curvature being bounded away from $0$ when $M - \dim \calM = 1$. An interesting feature of our result is that the asymptotic formula holds beyond the conjectured range of the distance to $\calM$. Furthermore, we obtain an upper bound for the number of rational points on $\calM$ with additional power saving to the bound in the analogue of Serre's dimension growth conjecture for compact submanifolds of $\RR^M$ when $M - \dim \calM > 1$. 
\end{abstract}

\maketitle

\thispagestyle{empty}
\setcounter{tocdepth}{1}
\tableofcontents

\section{Introduction}
In this article we study the density of rational points `close' to smooth manifolds. Let $\calM$ be a compact (immersed) submanifold of $\RR^M$ and let
$R = M - \dim \calM$ be the codimension of $\calM$.
Given $Q \in \NN$ and $\delta \geq 0$, we let
$$N(\calM; Q, \del):=\# \{  (\a, q) \in \Z^M \times \NN: 1 \leq q \leq Q,  \dist (\a/q, \calM) \leq \del/q\},$$
where
$\dist(\cdot, \cdot)$ denotes the $L^{\infty}$-distance on $\RR^M$. The study of $N(\calM; Q, \del)$ is an active area of research which has seen significant progress recently. It is an interesting problem in its own right, but it also relates to other areas of mathematics such as
Diophantine approximation. There is also a connection to an interesting question raised by Mazur in \cite[pp.331]{BM}
``given  a  smooth curve  in  the  plane,  how  near  to it can a point with rational coordinates get and still miss?''

A trivial estimate for $N(\calM; Q, \del)$ is given by
$$
N(\calM; Q, \del) \ll Q^{\dim \calM + 1},
$$
while a probabilistic heuristic suggests
$$
\delta^R Q^{\dim \calM + 1} \ll N(\calM; Q, \del) \ll \delta^R Q^{\dim \calM + 1}.
$$
We know that this heuristic estimate does not hold in complete generality. For example, if $\calM$ is a rational hyperplane in $\RR^M$ and $\del \leq 1$, then we have
$$
Q^{\dim \calM + 1} \ll N(\calM; Q, \del) \ll Q^{\dim \calM + 1}.
$$
In one of the spectacular achievements in the field \cite{Ber12}, Beresnevich established the following sharp lower bound
\begin{eqnarray}
\label{Bbdd}
N(\calM; Q, \del) \gg_{\calM} \delta^R Q^{\dim \calM + 1} \quad \textnormal{for any } \delta \gg Q^{-\frac{1}{R}},
\end{eqnarray}
assuming $\calM$ is an analytic submanifold of $\RR^M$ which contains at least one nondegenerate (see \cite{Ber12} for the definition) point.
In his groundbreaking work \cite{JJH}, Huang proposed the following conjecture.
\begin{conjecture}\cite[Conjecture 1]{JJH}
\label{conjJJH}
Let $\calM$ be a bounded immersed submanifold of $\R^M$ with boundary. Let $R = M - \dim \calM$. Suppose $\calM$ satisfies `proper' curvature conditions. Then there exists a constant $c_{\calM} > 0$ depending only on $\calM$ such that\footnote
{The statement
$N(\calM; Q, \delta) \sim  c_{\calM} \delta^{R} Q^{\dim \calM +1}$
means $\lim_{Q \rightarrow \infty}
\frac{N(\calM; Q, \delta)
}{c_{\calM} \delta^{R} Q^{\dim \calM +1}} = 1$.}
$$
N(\calM; Q, \delta) \sim  c_{\calM} \delta^{R} Q^{\dim \calM +1}
$$
when $\delta \geq Q^{- \frac{1}{R} + \epsilon}$ for some $\epsilon > 0$ and $Q \rightarrow \infty.$

\end{conjecture}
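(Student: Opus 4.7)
The plan is to reduce the counting problem to a Fourier-analytic lattice-point count and then exploit the curvature hypothesis through oscillatory integral estimates, in the spirit of Huang's original hypersurface argument.

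First, I would cover $\calM$ by finitely many coordinate patches parametrized by smooth Monge-type maps $\phi : U \to \RR^M$, with $U \subset \RR^{\dim \calM}$ open and bounded, together with a smooth partition of unity. After this localization, the neighbourhood condition $\dist(\a/q, \calM) \leq \delta/q$ becomes the existence of some $\u \in U$ with $|\a - q\phi(\u)| \leq \delta$ in the $L^\infty$ norm. Thus, writing the sharp indicator as a smooth bump $\psi_\delta$ of scale $\delta$ supported in a box of width $\asymp \delta$ and extending the count to a weighted sum, I would study
\begin{equation*}
N(\calM;Q,\delta) \approx \sum_{q \leq Q} \sum_{\a \in \Z^M} \int_U \psi_\delta(\a - q\phi(\u))\, \rd\u,
\end{equation*}
with harmless adjustments to pass between the weighted and unweighted counts.

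Second, I would apply Poisson summation in $\a$ to convert the inner sum into a dual sum over frequencies $\h \in \Z^M$:
\begin{equation*}
\sum_{\a \in \Z^M} \psi_\delta(\a - q\phi(\u)) = \delta^M \sum_{\h \in \Z^M} \hat\psi(\delta \h)\, e(\h \cdot q \phi(\u)).
\end{equation*}
The $\h = \0$ term, after integrating in $\u$ and summing over $q \leq Q$, yields the expected main term $c_\calM \delta^R Q^{\dim\calM +1}$, with the constant $c_\calM$ coming from the intrinsic volume of $\calM$ relative to the parametrization. The contribution of $\h \neq \0$ must be shown to be of smaller order; after a dyadic decomposition $|\h| \asymp H$ and summing trivially in $q$, one reduces to bounding the oscillatory integral
\begin{equation*}
I_\h(q) = \int_U \chi(\u) e(q\, \h \cdot \phi(\u))\, \rd\u,
\end{equation*}
where $\chi$ is a smooth cutoff. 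The curvature hypothesis, generalizing non-vanishing Gaussian curvature in the hypersurface case, should guarantee that for each direction $\widehat\h = \h/|\h|$ the phase $\widehat\h \cdot \phi$ has Hessian of sufficiently large rank, so that stationary phase (or van der Corput's lemma in several variables) gives decay $|I_\h(q)| \ll (q|\h|)^{-\dim \calM/2}$ uniformly in $\widehat\h$. Combining this with $\|\hat\psi(\delta\h)\|_{L^1}$-type estimates and summing dyadically in $H$ and in $q \leq Q$ yields an error term of acceptable size provided $\delta$ is not too small.

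The main obstacle is to push the oscillatory estimate uniformly for all $\h \neq \0$, including directions $\widehat\h$ that are close to the tangent bundle of $\calM$, where the phase $\widehat\h \cdot \phi$ becomes nearly degenerate. This is the point at which a sharp, invariant formulation of the curvature condition is essential, and where extensions from codimension one to higher codimension $R > 1$ are genuinely delicate: one must formulate a condition that controls the rank and non-degeneracy of the Hessian of $\widehat\h \cdot \phi$ uniformly in the unit normal direction, and one must ensure that the total mass coming from bad $\widehat\h$ can be absorbed. To reach below the heuristic threshold $\delta \geq Q^{-1/R+\varepsilon}$, I would expect to gain extra cancellation by averaging the exponential sum over $q$ via Weyl differencing or a Cauchy--Schwarz step, producing a bilinear structure in $q$ whose analysis again hinges on the same curvature input. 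These two steps, the uniform stationary phase estimate and the bilinear refinement, are where the technical core of the proof will lie.
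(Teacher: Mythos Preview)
The statement is a conjecture which the paper does not prove in full; what the paper establishes is the conjecture under the specific hypothesis (Condition~1.2) that in a Monge patch $(\x, f_1(\x), \ldots, f_R(\x))$ one has $\det H_{t_1 f_1 + \cdots + t_R f_R}(\x_0) \neq 0$ for every nonzero $(t_1,\ldots,t_R)$. Your outline also has a setup problem: replacing the indicator of ``there exists $\u$ with $|\a - q\phi(\u)|\leq\delta$'' by $\int_U \psi_\delta(\a - q\phi(\u))\,\rd\u$ weights each admissible $(\a,q)$ by the measure of the compatible $\u$-set, which is of order $(\delta/q)^{\dim\calM}$ rather than $1$. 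Consequently your $\h=\0$ contribution is $\asymp Q\delta^M$, not $c_\calM\delta^R Q^{\dim\calM+1}$; this is not a harmless renormalisation because the weight depends on $q$. The paper sidesteps this entirely by working in the graph chart from the start, so that writing $\a=(\a',\a'')\in\Z^n\times\Z^R$ the tangential block pins down $\u=\a'/q$ and the problem becomes the $R$ scalar conditions $\|q f_r(\a'/q)\|\leq\delta$, with no $\u$-integral.

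Beyond this, the paper's argument is structurally different from what you sketch. There is no Poisson summation in all $M$ variables and no Weyl differencing or bilinear step in $q$. Instead the $R$ fiber constraints are majorised by Selberg magic functions of order $J$, introducing a sum over $\j\in\Z^R$ with $|\j|\leq J$, and Poisson is applied only in $\a'\in\Z^n$. Stationary phase then brings in the Legendre transform $F_{\j}^*$ of $F_{\j}=f_1+\tfrac{j_2}{j_1}f_2+\cdots+\tfrac{j_R}{j_1}f_R$, and Condition~1.2 is exactly what makes $\nabla F_{\j}$ invertible uniformly in $\j$. The saving in $q$ comes from partial summation, producing a factor $\min(Q,\|j_1 F_{\j}^*(\k/j_1)\|^{-1})$. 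The decisive idea, absent from your plan, is that the resulting error is itself a weighted count of the same shape, now for the dual family $\{F_{\j}^*\}$; the paper runs this Legendre dualization a second time and then invokes Huang's codimension-one theorem as a black box to close the argument. Your direct route via $\int_U e(q\,\h\cdot\phi)$ would at best give $|I_\h(q)|\ll (q|\h|)^{-n/2}$, and this does not sum adequately over $\h\in\Z^M$ since $M>n$; the separation of base and fiber variables is essential to reach the range the paper obtains.
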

It is not formulated precisely in \cite{JJH} what `proper' curvature conditions mean in this context.
In the same article Huang established Conjecture \ref{conjJJH} for the case when $\calM$ is a hypersurface with Gaussian curvature bounded away from zero.
Previously, the conjecture was only known to hold for planar curves \cite{HuangC}.
For earlier results towards Conjecture \ref{conjJJH} see for example \cite{BDV07}, \cite{Huxley}, \cite{VV06} for the case of planar curves, and \cite{Ber12}, \cite{BVVZ} for the more general case.
We refer the reader to \cite{JJH} for a more detailed development of the field. \par

In light of (\ref{Bbdd}) and Conjecture \ref{conjJJH}, it is natural to ask whether one can
obtain estimates for $N(\calM; Q, \delta)$ beyond the range $\delta \geq Q^{- \frac{1}{R} + \epsilon}$
under certain curvature conditions. Investigating this problem,
along with constructing a class of manifolds of arbitrary codimension that satisfies Conjecture \ref{conjJJH},
is the primary goal of this article.

We now state our main result in detail. Let $\calM$ be as in Conjecture \ref{conjJJH}. We will work with $\calM$ locally. Thus, in view of the implicit function theorem, we may assume without loss of generality that
\begin{eqnarray}
\label{defM}
\calM : = \{ (\x, f_1(\bfx), \ldots, f_R(\bfx)) \in \RR^M: \x = (x_1, \ldots, x_n) \in \overline{B_{\varepsilon_0}(\x_0)} \},
\end{eqnarray}
where $\x_0 \in \RR^n$, $\varepsilon_0 > 0$ and $f_r\in C^{\ell} (\RR^n)$ $(1\leq r\leq R)$ for some $\ell \geq 2$.
In particular, $\dim \calM = n$.
We note that for a general compact submanifold $\calM \subseteq \RR^M$, estimating $N(\calM; Q, \delta)$
reduces to that for a finite number of subsets of $\calM$ of the form $(\ref{defM})$.

Let $w \in C_0^{\infty}(\mathbb{R}^n)$ be a non-negative weight function with $\supp w$
contained in a sufficiently small (with respect to $\mathcal{M}$) open neighbourhood of
$\x_0$. For $Q\in \N$ and $0 \leq \del \leq \frac12$, we define
\begin{eqnarray}
\label{eqn defn1'}
\mathcal{N}_w(Q, \delta) = \sum_{\substack{ \a \in \mathbb{Z}^n \\ q \leq Q \\ \| q f_1(\a/q) \| \leq \delta  \\ \vdots  \\  \| q f_R(\a/q) \| \leq \delta }}
w \left( \frac{\a}{q} \right),
\end{eqnarray}
where $\| \cdot \|$ denotes the distance to the nearest integer. Since $\| x \| \leq 1/2$ $(x \in \mathbb{R})$, we only consider $0 \leq \delta \leq 1/ 2$.
Let
$$N_0:= \sum_{\substack{ \a \in \mathbb{Z}^n \\ q \leq Q } } w \left( \frac{\a}{q} \right).$$
Given any $f \in C^2(\mathbb{R}^n)$ we denote by $H_f(\x)$ the Hessian matrix of $f$ evaluated at $\x \in \mathbb{R}^n$, i.e.
the $n \times n$ matrix whose $(i,j)$-th entry is $\frac{\partial^2 f}{\partial x_i \partial x_j} (\x)$ $(1 \leq i, j \leq n)$.
The following is the curvature condition we require in this article.
\begin{cond}
\label{assp1}
Given any $(t_1, \ldots, t_R) \in \RR^R \backslash \{ \mathbf{0}\}$,
$$
\det H_{t_1 f_1 + \cdots + t_R f_R} (\mathbf{x}_0) \neq 0.
$$
\end{cond}
We remark that when $R=1$, Condition \ref{assp1} reduces to $\det H_{f_1} (\mathbf{x}_0) \neq 0$.
With these notations we have the following result, which contains \cite[Theorem 2]{JJH}
as a special case when $R = 1$.
\begin{thm}
\label{main upper bound thm}
Let $n\geq 2$ and $\ell > \max\{  n+1, \frac{n}{2}+4\}$. Suppose Condition \ref{assp1} holds and that $\varepsilon_0 > 0$ is sufficiently small.
Then we have
$$
\left| \mathcal{N}_w(Q, \delta) - (2\del)^R N_0\right| \ll
\begin{cases}
\del^{\frac{(R-1)(n-2)}{n}} Q^n  \mathcal{E}_n(Q) & \mbox{ if } 
\delta \geq Q^{- \frac{n}{n+2(R-1)}}
\\
Q^{n -\frac{(n-2) (R-1) }{n + 2(R-1)} }   \mathcal{E}_n(Q)
& \mbox{ if }     
\delta < Q^{- \frac{n}{n+2(R-1)}},
\end{cases}
$$
where
$$
\mathcal{E}_n(Q)
=
\begin{cases}
    \exp( \mathfrak{c}_1 \sqrt{\log Q}) & \mbox{if } n = 2 \\
    (\log Q)^{\mathfrak{c}_2} & \mbox{if } n \geq 3. \\
\end{cases}
$$
for some positive constants $\mathfrak{c}_1$ and $\mathfrak{c}_2$.
Here the constants $\mathfrak{c}_1$ and $\mathfrak{c}_2$ and the implicit constants
depend only on $\mathcal{M}$ and $w$.
\end{thm}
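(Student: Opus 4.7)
The overall plan is to follow the Fourier-analytic strategy of Huang \cite{JJH} for $R = 1$ and generalise the exponential-sum bounds to vector-valued frequencies $\h = (h_1, \ldots, h_R) \in \ZZ^R$, with Condition \ref{assp1} providing the key uniform non-degeneracy of the Hessian. First I replace each sharp cutoff $\|q f_r(\a/q)\| \le \delta$ by smooth Selberg-type majorants and minorants $\chi_\delta^{\pm}$ of $\1_{[-\delta, \delta]}$ on $\RR/\ZZ$, whose Fourier coefficients $c_h(\delta)$ satisfy $c_0(\delta) = 2\delta + O(\delta^N)$ for any $N$, and $|c_h(\delta)| \ll \min(\delta, |h|^{-1})$ with faster polynomial decay after further convolution with a smooth bump. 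Expanding the resulting product of $R$ Fourier series and isolating the $\h = \0$ term gives
\[
\calN_w(Q, \delta) = (2\delta)^R N_0 \; + \; \sum_{\h \in \ZZ^R \setminus \{\0\}} \prod_{r=1}^{R} c_{h_r}(\delta) \sum_{q \le Q} S_{\h}(q) \; + \; O(\text{smoothing}),
\]
where $F_{\h}(\x) := h_1 f_1(\x) + \cdots + h_R f_R(\x)$ and $S_{\h}(q) := \sum_{\a \in \ZZ^n} w(\a/q) \, e\bigl(q F_{\h}(\a/q)\bigr)$.

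Next I extract the quantitative Hessian bound implied by Condition \ref{assp1}. Writing $\h = |\h| \bft$ with $\bft \in S^{R-1}$, the function $F_{\h}/|\h| = t_1 f_1 + \cdots + t_R f_R$ lies in a compact family of $C^{\ell}$ functions, each of which has nonvanishing Hessian determinant at $\x_0$. Continuity in $(\bft, \x)$ and compactness of $S^{R-1}$ yield
\[
|\det H_{F_{\h}}(\x)| \; \gg \; |\h|^n \qquad \text{for all } \x \in \supp w, \; \h \ne \0,
\]
provided $\varepsilon_0$ and $\supp w$ are chosen small enough. I then bound $S_{\h}(q)$ via Poisson summation on $\a$, obtaining
\[
S_{\h}(q) = q^n \sum_{\c \in \ZZ^n} \int_{\RR^n} w(\x) \, e\bigl(q(F_{\h}(\x) - \c \cdot \x)\bigr) \, \d\x.
\]
On $\supp w$ one has $|\nabla F_{\h}| \ll |\h|$, so for $|\c| \gg |\h|$ iterated integration by parts (absorbing the $\ell > \max\{n+1, n/2 + 4\}$ derivatives of the $f_r$) gives negligible contribution; for $|\c| \ll |\h|$ the non-degenerate stationary-phase method combined with the Hessian lower bound produces integrals of size $\ll (q|\h|)^{-n/2}$, with a careful treatment of stationary points approaching $\partial \supp w$ analogous to \cite{JJH}.

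Inserting these estimates into the error series, introducing a truncation parameter $H$, and splitting into $|\h| \le H$ and $|\h| > H$, I sum successively over $\c$ (of which $\ll |\h|^n$ are relevant), over $q \le Q$, and finally over $\h$ weighted by $\prod_r c_{h_r}(\delta)$. Balancing the contributions from the two ranges of $\h$ produces the threshold $\delta = Q^{-n/(n + 2(R-1))}$ of the theorem: above it the dominant contribution is $\delta^{(R-1)(n-2)/n} Q^n \mathcal{E}_n(Q)$, while below it the bound freezes at $Q^{n - (n-2)(R-1)/(n+2(R-1))} \mathcal{E}_n(Q)$. The factor $\mathcal{E}_n(Q)$, behaving as $\exp(\mathfrak{c}_1 \sqrt{\log Q})$ when $n = 2$ and $(\log Q)^{\mathfrak{c}_2}$ when $n \ge 3$, arises from divisor-type estimates in the Poisson / stationary-phase boundary analysis, exactly as in Huang's work.

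The main obstacle is this last step: the vector-valued $\h$ yields product-structured Fourier weights $\prod_r \min(\delta, |h_r|^{-1})$ whose combinatorial interaction with the exponential-sum bound on $|S_{\h}(q)|$ — which depends only on $|\h|$, not on the shape of $\h$ — requires a careful accounting of the various shapes of $\h$ at each dyadic scale to extract the exponent $(R-1)(n-2)/n$. A secondary obstacle is that every stationary-phase constant must be uniform in $\h$, which is why the quantitative form $|\det H_{F_{\h}}| \gg |\h|^n$ (rather than mere non-vanishing at $\x_0$) is essential.
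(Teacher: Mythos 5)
Your setup — Selberg magic functions, Poisson summation in $\a$, uniform Hessian non-degeneracy via compactness of $S^{R-1}$, and the stationary-phase estimate $\ll (q|\h|)^{-n/2}$ in the relevant range of $\c$ — matches the paper's Sections 4--5 closely, and the quantitative bound $|\det H_{F_\h}(\x)| \gg |\h|^n$ is exactly what the paper establishes via Lemma~\ref{ct1}. The choice of exponents and the threshold $\delta = Q^{-n/(n+2(R-1))}$ are also correct.

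However, there is a genuine gap at the decisive step. After stationary phase, the contribution from frequencies $\c$ near the image of $\nabla F_\h$ is not merely of size $(q|\h|)^{-n/2}$; it carries the oscillatory phase $e\bigl(-q j_1 F_\j^*(\k/j_1)\bigr)$, where $F_\j^*$ is the Legendre transform of the normalized function $F_\j = f_1 + (j_2/j_1) f_2 + \cdots + (j_R/j_1) f_R$. Summing over $q$ by partial summation turns this into $\min\bigl(\|j_1 F_\j^*(\k/j_1)\|^{-1}, Q\bigr)$, so the remaining sum over $\k$ and $\j$ is itself a rational-points-near-manifolds counting problem in the dual variables. This cannot be dispatched by ``divisor-type estimates'' or by a purely combinatorial accounting of the shapes of $\h$ across dyadic scales; nothing in your outline controls how often $j_1 F_\j^*(\k/j_1)$ lands near an integer. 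That is the content of the paper's Proposition~\ref{prop main}, whose proof (Section~\ref{auxinequ}) applies the Fej\'er kernel, a second Poisson summation, and a second stationary-phase argument that undoes the Legendre transform, and then — after summing out $j_2,\ldots,j_R$ trivially to isolate the single function $f_1$ — invokes Huang's Theorem~2 as a black box to close the recursion. Relatedly, your attribution of the factor $\mathcal{E}_n(Q)$ to divisor estimates misreads Huang's mechanism: in $R=1$ it comes from iterating the Legendre-transform reduction roughly $\log\log Q$ times, and here it is inherited from that same iteration inside the black-box invocation, not generated afresh. Your characterization of the ``main obstacle'' as the interaction between the product weights $\prod_r \min(\delta, |h_r|^{-1})$ and the $|\h|$-dependence of the exponential-sum bound therefore identifies a real bookkeeping subtlety (the paper handles it by normalizing so that $j_1 \geq j_r$ and splitting dyadically), but misses the structural obstacle, which is the dual counting problem and the need for a second pass through the whole machinery.
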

We defer making a technical remark (Remark \ref{REM1}) regarding this theorem to the end of Section \ref{auxinequ}.

By the Poisson summation formula we have that $N_0 = \sigma Q^{n+1} + O(Q^n)$ for some positive constant $\sigma$ depending only on $w$ and $n$ (see \cite[(6.2)]{JJH}).
Therefore, it follows immediately from Theorem \ref{main upper bound thm} that
\begin{eqnarray}
\label{asym1}
\mathcal{N}_w(Q, \delta) = (2\del)^R \sigma Q^{n+1} + O \left(\del^{\frac{(R-1)(n-2)}{n}} Q^{n} \mathcal{E}_n(Q) \right)
\end{eqnarray}
holds when $\delta \geq  Q^{- \frac{n}{n + 2(R-1) } + \epsilon }$ for any $\epsilon > 0$ sufficiently small.
Then by approximating the characteristic function of the set $\overline{B_{\varepsilon_0}(\x_0)}$ by smooth weight functions as in \cite[Section 7]{JJH}, we establish Conjecture \ref{conjJJH} via (\ref{asym1}).
In fact, since
$$
Q^{\frac{-1}{R}} \geq Q^{- \frac{n}{n + 2(R-1) }  }
$$
we obtain the asymptotic formula for $N(\calM; Q, \delta)$ beyond the range of $\delta$ in the conjecture.
\begin{cor} Let $\calM$ be as in (\ref{defM}), $n\geq 2$ and $\ell > \max\{  n+1, \frac{n}{2}+4\}$. Suppose Condition \ref{assp1} holds
and that $\varepsilon_0 > 0$ is sufficiently small. Then there exists a constant $c_{\calM} > 0$ depending only on $\calM$ such that
$$
N(\calM; Q, \delta) \sim  c_{\calM} \delta^{R} Q^{\dim \calM +1}
$$
when $\delta \geq Q^{ - \frac{n}{n + 2(R-1) } + \epsilon}$ for any $\epsilon > 0$ sufficiently small and $Q \rightarrow \infty.$
In particular, Conjecture \ref{conjJJH} holds in this case.
\end{cor}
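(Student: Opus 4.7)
The plan is to deduce the Corollary from the asymptotic formula~(\ref{asym1}) supplied by Theorem~\ref{main upper bound thm}, combined with a standard sandwiching of the characteristic function of $\overline{B_{\varepsilon_0}(\x_0)}$ by smooth weights, following Huang's treatment of the hypersurface case in \cite[Section~7]{JJH}.

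First I would relate $N(\calM;Q,\delta)$ to the weighted count $\mathcal{N}_w(Q,\delta)$. Because $\calM$ is the graph of $(f_1,\dots,f_R)$ over $\overline{B_{\varepsilon_0}(\x_0)}$, for each rational point $(\a,\b)/q\in\ZZ^M/q$ the condition $\dist((\a,\b)/q,\calM)\leq\delta/q$ is equivalent, up to a bounded multiplicative adjustment of $\delta$ arising from the Lipschitz constants of the $f_r$ on the compact parameter domain, to the two conditions $\a/q\in\overline{B_{\varepsilon_0}(\x_0)}$ and $\|qf_r(\a/q)\|\leq\delta$ for all $r$. A careful bracketing then yields
$$
\mathcal{N}_{w^-}(Q,(1-\kappa)\delta)\;\leq\; N(\calM;Q,\delta)\;\leq\;\mathcal{N}_{w^+}(Q,(1+\kappa)\delta)
$$
for any $\kappa>0$, where $w^\pm$ are rough weights with $w^-\leq\mathbf{1}_{\overline{B_{\varepsilon_0}(\x_0)}}\leq w^+$ differing from the indicator only on a shell of width $O(\kappa)$ about $\partial B_{\varepsilon_0}(\x_0)$.

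Next I would smooth $w^\pm$. For a small parameter $\eta>0$, replace them by $w_\eta^\pm\in C_0^\infty(\RR^n)$ that agree with $w^\pm$ outside an $\eta$-neighborhood of $\partial B_{\varepsilon_0}(\x_0)$ and whose derivatives up to order $\ell$ are controlled by negative powers of $\eta$. Applying~(\ref{asym1}) and the Poisson evaluation $N_0=\sigma Q^{n+1}+O(Q^n)$ gives
$$
\mathcal{N}_{w_\eta^\pm}(Q,\delta) \;=\; (2\delta)^R\,\sigma_\eta^\pm\, Q^{n+1}+O_\eta\!\bigl(\delta^{(R-1)(n-2)/n}Q^n\mathcal{E}_n(Q)\bigr),
$$
with $\sigma_\eta^\pm=\tfrac{1}{n+1}\int w_\eta^\pm$, and $\sigma_\eta^\pm\to\sigma_\calM:=\tfrac{1}{n+1}\meas\bigl(\overline{B_{\varepsilon_0}(\x_0)}\bigr)$ as $\eta,\kappa\to 0$. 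The boundary-layer discrepancy $w_\eta^+-w_\eta^-$ has $L^1$-mass $O(\eta+\kappa)$ and thus contributes an additional error of order $(2\delta)^R(\eta+\kappa)Q^{n+1}$ between the sandwich bounds.

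Finally I would balance parameters and verify the range of $\delta$. Letting $\kappa\to 0$ and choosing $\eta=\eta(Q)\to 0$ slowly enough (e.g.\ a negative power of $\log Q$) so that the $\eta$-dependence of the implicit constants in Theorem~\ref{main upper bound thm} remains subordinate to the main term, the sandwich collapses to the desired asymptotic $N(\calM;Q,\delta)\sim c_\calM\delta^RQ^{n+1}$ with $c_\calM=2^R\sigma_\calM>0$ depending only on $\calM$. The error $\delta^{(R-1)(n-2)/n}Q^n\mathcal{E}_n(Q)$ is dominated by the main term exactly when $\delta^{(n+2(R-1))/n}\gg Q^{-1}\mathcal{E}_n(Q)$, i.e.\ when $\delta\geq Q^{-n/(n+2(R-1))+\epsilon}$ for any $\epsilon>0$. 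Since $n/(n+2(R-1))\geq 1/R$ whenever $n\geq 2$, this admissible range extends strictly below $Q^{-1/R}$, so Conjecture~\ref{conjJJH} follows in the setting of Condition~\ref{assp1}. The principal technical obstacle is the smoothing bookkeeping: the implicit constants in Theorem~\ref{main upper bound thm} depend on derivatives of the weight and therefore worsen as $\eta\to 0$, so one must quantitatively interleave the choices of $\eta(Q)$ and $\kappa$ so that both the boundary-layer contribution and the Theorem~\ref{main upper bound thm} error remain $o(\delta^RQ^{n+1})$ throughout the claimed range of $\delta$.
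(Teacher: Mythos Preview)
Your proposal is correct and follows essentially the same approach as the paper: the paper derives the corollary by invoking~(\ref{asym1}) together with an approximation of the characteristic function of $\overline{B_{\varepsilon_0}(\x_0)}$ by smooth weights exactly as in \cite[Section~7]{JJH}, and then observes the inequality $Q^{-1/R}\geq Q^{-n/(n+2(R-1))}$ to conclude that Conjecture~\ref{conjJJH} holds. Your write-up supplies more of the underlying bookkeeping (the Lipschitz bracketing with parameter $\kappa$, the smoothing parameter $\eta$, and the need to choose $\eta(Q)\to 0$ slowly because of Remark~\ref{REM1}), but the structure and the key inputs are identical to what the paper indicates.
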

Although Condition \ref{assp1} reduces to Gaussian curvature being bounded away from $0$ when $R=1$,
for larger values of $R$ it makes sense to first question whether such functions satisfying Condition \ref{assp1}
even exist.
As we shall see in Section \ref{examples}, we can construct functions $f_r$ $(1 \leq r \leq R)$ satisfying Condition \ref{assp1} with $n = 2^{R-1}$ for every $R \geq 2$. Thus, by taking larger values of $R$, we establish the existence of a class of manifolds where the asymptotic in Conjecture \ref{conjJJH} holds with $\delta$ arbitrarily close to $Q^{-1}$.

Furthermore, work of Huang \cite{JJH} played an important role in
the first author's work \cite{DS} generalising Bourgain's result \cite{Bourg} on Diophantine inequalities involving
quadratic ternary diagonal forms to higher degrees.
Thus it is plausible that
Theorem \ref{main upper bound thm} will be found useful in similar, and other, applications as well.

If we consider the case $\del=0$, then $\calN_w(Q,0)$ counts the (weighted) number of rational points {\em on} the manifold $\calM$ with bounded denominators. As explained in \cite[pp. 2047]{JJH} Conjecture \ref{conjJJH} implies 
\begin{equation}
\label{eqnn}
N(\calM; Q, 0)\ll Q^{\dim \calM + \eps}
\end{equation}
for any $\eps > 0$ sufficiently small.
One may view this as an analogue of Serre's dimension growth conjecture, which is stated below, for smooth submanifolds of $\RR^M$.
In general, the upper bound (\ref{eqnn}) is sharp.
\begin{conjecture}[The dimension growth conjecture]\label{Serre}
Let $X \subseteq \mathbb{P}_{\mathbb{Q}}^{M-1}$ be an irreducible projective variety of degree at least two defined over $\mathbb{Q}$. Let $N_X(B)$ be the number of rational points on $X$ of naive height bounded by $B$. Then
$$N_X(B)\ll B^{\dim X} (\log B)^c$$
for some constant $c > 0$.
\end{conjecture}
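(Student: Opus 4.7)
The plan is to prove the bound via the determinant method, in the tradition of Heath--Brown, Broberg, and Salberger. First I would reduce to the case where $X$ is a geometrically integral projective hypersurface of degree $d \geq 2$. Given $X \subseteq \PP^{M-1}$ of dimension $n$ with codimension strictly greater than one, a generic linear projection $\pi \colon \PP^{M-1} \dashrightarrow \PP^{n+1}$ with center disjoint from $X$ yields a hypersurface $\pi(X)$ of the same degree; the map $X \to \pi(X)$ is birational with fibers of size bounded in terms of $X$, and rational points of height $\leq B$ on $X$ map to rational points of height $\ll_X B$ on $\pi(X)$, up to an exceptional locus of strictly smaller dimension that is absorbed by induction on $\dim X$.

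Next I would apply the $p$-adic determinant method to a geometrically integral hypersurface $X \subseteq \PP^{n+1}$ of degree $d$. For primes $p$ in a carefully chosen dyadic range around $B^{n/d}$, one partitions the set of rational points of height $\leq B$ according to their reduction in $X(\FF_p)$. On each reduction class, primitive integral representatives $\z \in \ZZ^{n+2}$ cluster $p$-adically near a common smooth $\FF_p$-point of $X$; Taylor expansion on $X$ furnishes the key determinantal estimate, showing that the matrix whose rows list all monomials of degree $\leq E$ evaluated at these representatives has rank strictly less than its column count. A vector in the kernel produces an auxiliary polynomial of degree $E$ vanishing on the entire reduction class, and a dimension check confirms it does not vanish identically on $X$ for an appropriate choice of $E$. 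Summing over reduction classes and (with Salberger's global refinement) over the prime range yields a family of auxiliary hypersurfaces $Y$ of degree $D = O_d(1)$, whose cardinality balances the per-hypersurface bound in what follows.

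The third step is induction on $\dim X$. Each intersection $X \cap Y$ is a proper subvariety of $X$ of dimension $n-1$ and degree $\leq dD$; decomposing it into geometrically irreducible components, one applies the inductive hypothesis to components of degree $\geq 2$ (yielding $O(B^{n-1}(\log B)^{c'})$ rational points each) and handles linear-subvariety components via Schanuel's theorem after parametrizing which $\QQ$-rational linear subvarieties lie on $X$. Summing over all auxiliary hypersurfaces and balancing the determinant-method parameters so the total does not exceed $B^n(\log B)^c$ closes the induction.

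The principal obstacle is the quadric case $d=2$. Here the determinant method is weakest (the ratio $E/d$ governing its gain stays bounded), and quadrics can contain large families of $\QQ$-rational linear subvarieties contributing many rational points that must be parametrized separately. For quadric surfaces in $\PP^3$ one can exploit explicit stereographic parametrizations and Hasse--Minkowski; for higher-dimensional quadrics one must combine the determinant method with circle-method or descent arguments to control the contribution of linear subvarieties uniformly in the ambient dimension. Making this uniform over all $X$ of degree $2$ is the genuinely hard part of the conjecture and has driven much of the subsequent literature.
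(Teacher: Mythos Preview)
The statement you are attempting to prove is stated in the paper as a \emph{conjecture}, not as a theorem, and the paper makes no attempt to prove it. It is quoted purely for context, as an analogue motivating the authors' bound $N(\calM;Q,0)\ll Q^{\dim\calM+\epsilon}$ for smooth submanifolds. The paper explicitly remarks that only the weaker version with $B^{\epsilon}$ in place of $(\log B)^c$ is known in full generality (due to Salberger), so there is no proof in the paper against which to compare your proposal.

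That said, your sketch is a reasonable high-level summary of the determinant method programme of Heath-Brown, Broberg, Browning--Heath-Brown--Salberger, and Salberger, and you correctly identify the degree-two case as the principal difficulty. But as written it is not a proof: several steps are stated as intentions rather than arguments (the birational projection step needs care with height comparison and exceptional loci; the parameter balancing in the induction is asserted rather than carried out; the quadric case is left open by your own admission). In particular, the version with a power of $\log B$ rather than $B^{\epsilon}$ requires the sharper counting of Castryck--Cluckers--Dittmann--Nguyen and is not fully established in all degrees, so claiming a complete proof here would overstate what is currently known.
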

There is a large body of work regarding the dimension growth conjecture, which (a version with $B^{\epsilon}$ in place of $(\log B)^c$)
is now a theorem due to Salberger \cite{Sal3}. We refer the reader to \cite{CCD} for an introduction to the topic and also to other work related to this conjecture, for example \cite{Broberg}, \cite{BroSal}, \cite{BHB1}, \cite{BHB2}, \cite{BHB3}, \cite{BHBS}, \cite{CCD}, \cite{HB02}, \cite{Marmon}, \cite{Sal1}, \cite{Sal2}, 
\cite{Sal4}, \cite{Walsh}. In general, the upper bound in Conjecture \ref{Serre} is sharp. For example, if $X$ contains a rational linear divisor, then this subvariety already contains $B^{\dim X}$ points of naive height bounded by $B$. However, it is possible to obtain a better upper bound by excluding divisors of small degree and imposing stronger conditions, e.g. on the degree of the variety. For hypersurfaces of degree at least four such a result has been established by Marmon \cite{Marmon}.
In the setting of smooth submanifolds of $\RR^M$, as an immediate consequence of Theorem \ref{main upper bound thm}
we obtain the following estimate breaking the $Q^{\dim \calM}$ barrier in (\ref{eqnn}).
\begin{cor}
Let $\calM$ be as in (\ref{defM}), $n\geq 3$ and $\ell > \max\{  n+1, \frac{n}{2}+4\}$.
Suppose Condition \ref{assp1} holds and that $\varepsilon_0 > 0$ is sufficiently small.
Then
$$
N(\calM; Q, 0) \ll Q^{n -\frac{(n-2) (R-1) }{n + 2(R-1)} } (\log Q)^{c}$$
for some constant $c > 0$.
\end{cor}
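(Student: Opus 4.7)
The plan is to derive the corollary as a direct specialization of Theorem \ref{main upper bound thm} to $\delta = 0$, combined with a standard smoothing argument to pass from the weighted count $\mathcal{N}_w(Q,0)$ to $N(\calM;Q,0)$.

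First, I would translate $N(\calM;Q,0)$ into an instance of $\mathcal{N}_w(Q,0)$. For $\calM$ as in (\ref{defM}), a rational point $\a/q$ lying on $\calM$ corresponds bijectively to a pair $(\a',q) \in \ZZ^n \times \NN$ with $\a'/q \in \overline{B_{\varepsilon_0}(\x_0)}$ and $\|q f_r(\a'/q)\| = 0$ for every $r = 1,\ldots,R$; the remaining coordinates $a_{n+r}$ are uniquely determined by $a_{n+r} = q f_r(\a'/q) \in \ZZ$. Consequently, for any non-negative $w \in C_0^\infty(\RR^n)$ with $w \geq \mathbf{1}_{\overline{B_{\varepsilon_0}(\x_0)}}$ and support contained in the small neighbourhood of $\x_0$ required by Theorem \ref{main upper bound thm}, one has $N(\calM;Q,0) \leq \mathcal{N}_w(Q,0)$.

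Next, apply Theorem \ref{main upper bound thm} with $\delta = 0$. Since $0 < Q^{-n/(n+2(R-1))}$, we are in the second case of the theorem, and the main term $(2\delta)^R N_0$ vanishes. Under the hypothesis $n \geq 3$ the error factor simplifies to $\mathcal{E}_n(Q) = (\log Q)^{\mathfrak{c}_2}$, so
\begin{equation*}
\mathcal{N}_w(Q,0) \ll Q^{n - \frac{(n-2)(R-1)}{n+2(R-1)}} (\log Q)^{\mathfrak{c}_2}.
\end{equation*}
Combining this with the inequality from the previous paragraph yields the claimed bound with $c = \mathfrak{c}_2$.

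There is essentially no substantive obstacle, which matches the statement in the text that the corollary is an immediate consequence of Theorem \ref{main upper bound thm}. The only mild technicality is ensuring that a smooth majorant $w$ of $\mathbf{1}_{\overline{B_{\varepsilon_0}(\x_0)}}$ exists with support small enough for the theorem to apply; this is arranged by choosing $\varepsilon_0$ slightly smaller (compactness of $\overline{B_{\varepsilon_0}(\x_0)}$ guarantees such a smooth majorant exists) and convolving with a standard mollifier. Unlike the derivation of the asymptotic for $N(\calM;Q,\delta)$ in \cite[Section 7]{JJH}, where one must sandwich the indicator between smooth minorants and majorants and take a limit, here only a majorant is needed, so no dyadic or sandwiching procedure is required.
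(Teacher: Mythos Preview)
Your proposal is correct and matches the paper's own treatment: the corollary is stated there as an immediate consequence of Theorem \ref{main upper bound thm}, and your argument (set $\delta=0$, use the second case of the theorem with $\mathcal{E}_n(Q)=(\log Q)^{\mathfrak{c}_2}$ for $n\geq 3$, and majorize $\mathbf{1}_{\overline{B_{\varepsilon_0}(\x_0)}}$ by a smooth $w$) is exactly the intended specialization. Your observation that only a smooth majorant is needed here, rather than the full sandwiching of \cite[Section 7]{JJH}, is accurate.
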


We prove Theorem \ref{main upper bound thm} by a combination of the method developed by Huang in \cite{JJH} and fibration arguments.
More precisely, Huang develops a procedure to relate the counting problem for a function to that for its Legendre transform,
and this allows him to iteratively improve the upper bound where about $\log \log Q$ iterations yield the optimal bound. In our situation we show that a similar, but more complicated, procedure can be made to work for a `nice' family (this essentially means Condition \ref{assp1} holds) of functions of the form $f_1 + t_2 f_2 + \cdots + t_R f_R$; however, we apply this procedure twice instead of once as in \cite{JJH}, at which point we are able to reduce the problem to that for one function where we can invoke the main result of \cite{JJH}.
After providing some auxiliary lemmata in Section \ref{prelim}, we prove Theorem \ref{main upper bound thm} in Section \ref{beginproof}--\ref{auxinequ}. In Section \ref{examples} we construct examples of real symmetric $n \times n$ matrices $A_1,\ldots, A_R$ with the property that
\begin{equation}\label{eqnAs}
\det (t_1A_1+\ldots +t_RA_R)\neq 0
\end{equation}
for all $(t_1,\ldots, t_R)\in \R^R \setminus\{\mathbf{0}\}$.
We can easily come up with functions $f_r \in C^{\ell}(\RR^n)$ with $H_{f_r}(\x_0)=A_r$ $(1\leq r\leq R)$, which in particular satisfy
Condition \ref{assp1}. For example, let
$$
f_r(\x) = \frac{1}{2}\sum_{1 \leq i, j \leq n} A_{r; i, j} x_i x_j +  g_r(\x)  \quad (1 \leq r \leq R),
$$
where $A_{r; i, j}$ is the $(i,j)$-th entry of $A_r$ and $g_r \in C^{\ell}(\RR^n)$
is any function such that $H_{g_r}(\x_0)= [0]$ $(1\leq r \leq R)$.

As pointed out to us by Mike Roth,
finding $n\times n$ (not necessarily symmetric) matrices $A_1,\ldots, A_R$ satisfying (\ref{eqnAs}) is closely related to a
well-known problem in homotopy theory and in the theory of fibre bundles concerning the number of linearly independent vector fields on spheres (We refer the reader to \cite{Adams} for more information on this topic.). In fact, such an $R$-tuple of symmetric matrices
gives rise to a system of $(R-1)$ linearly independent vector fields on the $(n-1)$-sphere $S^{n-1}\subseteq \R^n$. In particular, it follows by work of Adams \cite{Adams} that $R\leq \varrho(n)$ where $\varrho(n)$ is the Radon-Hurwitz function.

\vspace{0.2cm}
{\em Acknowledgements.} The authors would like to thank Wilberd van der Kallen and Alejandro Gonz\'{a}lez Nevado for providing 
Examples 1 and   2  respectively in Section \ref{examples}. We thank Jing-Jing Huang, Igor Klep and Markus Schweighofer for helpful discussions and Mike Roth for pointing out to us the connection between matrices satisfying (\ref{eqnAs}) and linearly independent vector fields on spheres. While working on this article the second author was supported by the NWO Veni Grant 016.Veni.192.047.

\section{Notation}
In this article $| \cdot |$ denotes the $L^{\infty}$-norm, i.e. $|\mathbf{z}| = \max_{1 \leq i \leq n} |z_i|$ where
$\z=(z_1,\ldots, z_n)\in \mathbb{R}^n$.  Given an open set $\calD \subseteq \mathbb{R}^n$ we denote by $C^{\ell}(\calD)$ the set of $\ell$-times continuously differentiable functions defined on $\calD$, $C^{\infty}(\calD)$ the set of smooth functions defined on $\calD$, and
$C_0^{\infty}(\calD)$ the set of smooth functions defined on $\calD$ with compact support (i.e. given $g \in C_0^{\infty}(\calD)$ the closure of its support $\supp g = \{ \x \in \calD: g(\x) \neq 0 \}$ is compact).
Given any $f \in C^1(\mathbb{R}^n)$ we let $\nabla f = ( \frac{\partial f}{\partial x_1}, \ldots, \frac{\partial f}{\partial x_n})$.
Given $\ve > 0$ and $\x_0 = (x_{0,1}, \ldots, x_{0,n}) \in \mathbb{R}^n$ we let
$$B_{\ve}(\x_0) = (x_{0,1} - \ve, x_{0,1} + \ve) \times \cdots \times (x_{0,n} - \ve, x_{0,n} + \ve).$$
Given $\mathcal{X} \subseteq \mathbb{R}^n$ we denote the boundary of $\mathcal{X}$ by
$\partial \mathcal{X} = \overline{\mathcal{X}} \backslash \mathcal{X}^{\circ}$,
where $\overline{\mathcal{X}}$ is the closure of $\mathcal{X}$ and $\mathcal{X}^{\circ}$ is the interior of $\mathcal{X}$.
For any $z \in \RR$ we let $e(z) = e^{2 \pi i z}$.
By the notation $f (\x)\ll g(\x)$ or $f(\x) = O(g(\x))$ we mean that there exits a constant $C > 0$ such that $|f (\x)| < C g(\x)$ for all $\x$ in
consideration. 

\section{Preliminaries}\label{prelim}
In this section we collect few facts about Legendre transform and oscillatory integrals, and some compactness results that we use later.\par
Given $F \in C^{\ell}(\mathbb{R}^n)$ let $\mathcal{U} \subseteq \RR^n$ be an open set such that
$\nabla F$ is invertible on $\mathcal{U}$.
We define the Legendre transform $F^*: \nabla F (\mathcal{U}) \to \RR$ of $F$ by
$$
F^*(\y) = \y \cdot (\nabla F)^{-1}(\y) - (F \circ (\nabla F)^{-1})(\y).
$$
It can be verified that $F^*$ is $\ell $-times continuously differentiable, $F^{**} = F$
and $\nabla F^* = (\nabla F)^{-1}$.
Also when $\y = \nabla F(\x)$ we have
\begin{eqnarray}\label{dual}
F^*(\y) = \x \cdot \y - F(\x)
\end{eqnarray}
and
\begin{eqnarray}
\label{Hssnreln}
H_{F^*}(\y) = H_F (\x)^{-1}.
\end{eqnarray}

For the following results we refer the reader to see for example \cite[Theorem 7.7.1 and Theorem 7.7.5]{Hormander}.

\begin{lem}(Non-stationary phase)
\label{Lem1}
Let $\ell \in \NN$ and $U_+ \subseteq \RR^d$ a bounded open set.
Let $\omega \in C_0^{\ell -1}(\RR^d)$ with $\overline{\supp \omega} \subseteq U_+$
and $\phi \in C^{\ell}(U_+)$ with $\nabla \phi(\x) \not = \mathbf{0}$ for all $\x \in \overline{\supp \omega}$.
Then for any $\lambda > 0$
$$
\left|  \int_{\RR^d} \omega(\x)e(\lambda \phi(\x)) d\x \right| \leq c_{\ell} \lambda^{-\ell + 1}.
$$
Furthermore, $c_{\ell}$ depends only on $\ell$, $d$, upper bounds for (the absolute values of) finitely many derivatives\footnote{This includes the zeroth partial derivative, i.e. the function itself, as well.}
of $\omega$ and $\phi$ on $U_+$, and a lower bound for $|\nabla \phi|$ on $\overline{\supp \omega}$.
\end{lem}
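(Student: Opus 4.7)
The plan is the classical integration-by-parts argument for oscillatory integrals. Since $|\nabla \phi|$ is bounded below on the compact set $\overline{\supp \omega}$ by hypothesis, the first-order differential operator
\[
L = \frac{1}{2\pi i \lambda} \sum_{j=1}^d \frac{\partial_j \phi(\x)}{|\nabla \phi(\x)|^2} \partial_j
\]
is well-defined on an open neighbourhood of $\overline{\supp \omega}$ inside $U_+$, and a direct computation gives $L[e(\lambda \phi(\x))] = e(\lambda \phi(\x))$. This is the key identity that converts oscillation into factors of $\lambda^{-1}$.

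The next step is to insert $L^{\ell-1}$ in front of $e(\lambda \phi)$ and move it off $e(\lambda \phi)$ by integrating by parts $\ell-1$ times. No boundary contributions appear since $\omega$ vanishes near $\partial U_+$, and the result is
\[
\int_{\RR^d} \omega(\x) e(\lambda \phi(\x)) \, \d\x = \int_{\RR^d} (L^t)^{\ell-1} \omega(\x) \cdot e(\lambda \phi(\x)) \, \d\x,
\]
where $L^t$ is the formal transpose of $L$. Each application of $L^t$ carries out a factor of $\lambda^{-1}$ and distributes one partial derivative among $\omega$ and the coefficient function $\partial_j \phi / |\nabla \phi|^2$; after $\ell-1$ iterations the integrand involves at most $\ell-1$ derivatives of $\omega$ (permissible since $\omega \in C_0^{\ell-1}$) and at most $\ell$ derivatives of $\phi$ (permissible since $\phi \in C^{\ell}$), together with negative powers of $|\nabla \phi|^2$.

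The final step is a pointwise estimate: bounding the Leibniz expansion of $(L^t)^{\ell-1} \omega$ using the given upper bounds on finitely many derivatives of $\omega$ and $\phi$ over $U_+$, together with the positive lower bound on $|\nabla \phi|$ over $\overline{\supp \omega}$, yields $|(L^t)^{\ell-1} \omega(\x)| \ll \lambda^{-(\ell-1)}$ uniformly on $U_+$. Taking absolute values inside the integral and using that $\supp \omega$ has finite Lebesgue measure (since $U_+$ is bounded) gives the desired bound $c_\ell \lambda^{-\ell+1}$, and tracking dependencies through the iteration shows that $c_\ell$ depends only on the data listed in the statement. The main (and really only) technical chore is the combinatorial bookkeeping of the Leibniz expansion after $\ell-1$ applications of $L^t$; there is no substantive obstacle, as this is the textbook non-stationary phase argument.
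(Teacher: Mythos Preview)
Your proposal is correct and is exactly the standard integration-by-parts proof of the non-stationary phase lemma. The paper does not supply its own proof of this lemma but simply refers to H\"ormander's treatise (Theorem~7.7.1 there), where the same argument via the operator $L$ and its transpose appears; so your approach coincides with the reference the paper invokes.
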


Recall given a symmetric matrix we define its signature to be the number of positive eigenvalues minus the number of negative eigenvalues.
\begin{lem}(Stationary phase)
\label{Lem2}
Let $\ell >  \frac{d}{2} + 4$ and $\mathcal{D}, \mathcal{D}_+ \subseteq \RR^d$ bounded open sets such that $\overline{\mathcal{D}} \subseteq \mathcal{D}_+$.
Let $\omega \in C_0^{\ell -1}(\RR^d)$ with $\overline{\supp \omega} \subseteq \mathcal{D}$
and $\phi \in C^{\ell}(\mathcal{D}_+)$. 
Suppose $\nabla \phi(\v_0) = \mathbf{0}$ and $\det H_{\phi}(\v_0) \not = 0$ for some $\v_0 \in \mathcal{D}$. Let
$\sigma$ be the signature of $H_{\phi}(\v_0)$ and $\Delta = |\det H_{\phi}(\v_0)|$.
Suppose further that $\nabla \phi(\x) \not = \mathbf{0}$ for all $\x \in \overline{\mathcal{D}} \setminus \{ \v_0 \}$.
Then
$$
\int_{\RR^d} \omega(\x) e(\lambda \phi(\x)) d \x
= e\left( \lambda \phi(\v_0) + \frac{\sigma}{8} \right) \Delta^{- \frac{1}{2} } \lambda^{- \frac{d}{2}} (u(\v_0) + O(\lambda^{-1})),
$$
where the implicit constant depends only on $\ell$, $d$, upper bounds for (the absolute values of) finitely many derivatives of $\omega$ and $\phi$ on $\mathcal{D}_+$, an upper bound for $|\x - \v_0|/ |\nabla \phi (\x)| $ on $\mathcal{D}_+$, and a lower bound for $\Del$.
\end{lem}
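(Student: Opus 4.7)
\emph{Proof plan.} The approach I would take is the classical three-step strategy: first localise near the critical point by a smooth cutoff $\chi\in C_0^\infty(\RR^d)$ equal to $1$ in a small neighbourhood of $\v_0$ and supported in a slightly larger one, and split the integral into the piece with $\omega\chi$ and the piece with $\omega(1-\chi)$. On $\overline{\supp(\omega(1-\chi))}$ the continuity and non-vanishing of $\nabla\phi$ on $\overline{\calD}\setminus\{\v_0\}$ give $|\nabla\phi|\gtrsim 1$ by compactness, so Lemma \ref{Lem1} shows that this piece contributes $O(\lambda^{-N})$ for some $N$ exceeding $d/2+1$ (indeed, as large as the available regularity allows), which is absorbed in the target error.

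For the localised piece I would invoke Morse's lemma: since $\phi\in C^\ell$ near $\v_0$ with $H_\phi(\v_0)$ non-singular of signature $\sigma$ and $|\det H_\phi(\v_0)|=\Del$, there exists a $C^{\ell-2}$ diffeomorphism $\Psi$ from a neighbourhood of $\0$ to a neighbourhood of $\v_0$ with $\Psi(\0)=\v_0$ and
\[
\phi(\Psi(\y)) - \phi(\v_0) = \tfrac12\bigl(y_1^2 + \cdots + y_p^2 - y_{p+1}^2 - \cdots - y_d^2\bigr),
\]
where $p=(d+\sigma)/2$, and $|\det D\Psi(\0)|=\Del^{-1/2}$. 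Writing $\tilde\omega(\y)=(\omega\chi)(\Psi(\y))|\det D\Psi(\y)|$, so that $\tilde\omega\in C_0^{\ell-3}$ with $\tilde\omega(\0)=\omega(\v_0)\Del^{-1/2}$, and denoting the quadratic form on the right by $Q(\y)$, the integral reduces to $e(\lambda\phi(\v_0))\int_{\RR^d}\tilde\omega(\y)\,e(\lambda Q(\y)/2)\,d\y$.

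The third step is to compute the distributional Fourier transform of $e(\lambda Q/2)$ by multiplying by the Gaussian regulariser $e^{-\epsilon|\y|^2}$, applying the exact Gaussian integral formula, and passing to $\epsilon\to 0^+$ with principal branches of the square roots, yielding
\[
\widehat{e(\lambda Q/2)}(\bet) = \lambda^{-d/2}\, e(\sigma/8)\, e(-Q(\bet)/(2\lambda))
\]
up to a universal constant. Plancherel's identity, Taylor-expanding the exponential in the resulting integrand as $1+O(|\bet|^2/\lambda)$, and the Fourier inversion formula $\int \hat{\tilde\omega}(\bet)\,d\bet=\tilde\omega(\0)$ together produce the announced main term $\omega(\v_0)\,\Del^{-1/2}\,e(\sigma/8)\,\lambda^{-d/2}$.

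The main obstacle will be uniform control of the $O(\lambda^{-d/2-1})$ error: the correction reduces to estimating $\int|\bet|^2\,|\hat{\tilde\omega}(\bet)|\,d\bet$, which requires integration-by-parts bounds on $\hat{\tilde\omega}$ in terms of sufficiently many derivatives of $\tilde\omega$, and these unravel through the Morse diffeomorphism to bounds on finitely many derivatives of $\omega$ and $\phi$ on $\calD_+$ together with the lower bound on $\Del$ and an upper bound for $|\x-\v_0|/|\nabla\phi(\x)|$ used to bound $D\Psi^{-1}$. The hypothesis $\ell>d/2+4$ is dictated by the need to balance the two derivatives lost in Morse's lemma against the derivatives required for the Plancherel-based error estimate.
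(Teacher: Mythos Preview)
The paper does not actually prove this lemma; it simply records that it is a simplified form of H\"ormander \cite[Theorem~7.7.5]{Hormander}, and that the regularity threshold $\ell>d/2+4$ is read off from \cite[p.~222, Remark]{Hormander}. Your three–step plan (smooth cutoff away from the critical point and appeal to Lemma~\ref{Lem1}; Morse normal form near $\v_0$; exact Gaussian Fourier transform plus Plancherel for the model quadratic phase) is precisely the classical argument underlying that theorem, so in substance you are reproducing the cited proof rather than offering an alternative route.

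One piece of bookkeeping deserves a second look. With $\phi\in C^{\ell}$ the Morse diffeomorphism $\Psi$ is $C^{\ell-2}$, but its Jacobian $|\det D\Psi|$ is only $C^{\ell-3}$, so $\tilde\omega=(\omega\chi)\circ\Psi\cdot|\det D\Psi|\in C_0^{\ell-3}$ rather than $C_0^{\ell-2}$. Bounding $\int|\bet|^{2}|\hat{\tilde\omega}(\bet)|\,d\bet$ by Cauchy--Schwarz against a Sobolev norm then requires $\tilde\omega\in H^{s}$ for some $s>d/2+2$, which with this count yields $\ell>d/2+5$ rather than the stated $\ell>d/2+4$. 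H\"ormander's organisation of the argument recovers the extra derivative (this is exactly the content of the remark the paper cites), so your sentence attributing the threshold $d/2+4$ to ``two derivatives lost in Morse's lemma'' undercounts by one. This does not affect the correctness of the strategy, only the sharpness of the regularity claim.
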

We remark that this is a simplified version of \cite[Theorem 7.7.5]{Hormander}.
The assumption on $\ell$ in Lemma \ref{Lem2} can be deduced from  \cite[pp. 222, Remark]{Hormander}.

\subsection{Compactness results}\label{secIS}
Let $m\in \Z_{\geq 0}$ and $\calG=\calG_1\times \calG_2\subseteq \R^{n+m}$, where $\calG_1\subseteq \R^n$ and $\calG_2\subseteq \R^m$ are bounded connected open sets. Let $G \in C^{\ell}(\calG)$ for some $\ell \geq 2$.
For a fixed vector $\t \in \calG_2$ we write $G_\t:\calG_1\rightarrow \R$ for the function $\bfx \mapsto G(\bfx,\t)$.\par

Let $\bfx_0\in \calG_1$ be a fixed point.

\begin{assp}\label{assp2}
We have $\det H_{G_\t}(\bfx_0)\neq 0$ for every $\t\in \calG_2$.
\end{assp}

\begin{lem}\label{ct1}
Let $G \in C^{\ell}(\calG)$, $\ell \geq 2$, and assume that $G$ satisfies Assumption \ref{assp2}. Let $\calF_2$ be a compact set contained in $\calG_2$. Then there exist a real number $\tau >0$  and constants $c_{\calF_2, 1}, c_{\calF_2, 2} > 0$
such that
$$c_{\calF_2, 1}\leq |\det H_{G_\t}(\bfx)|\leq c_{\calF_2, 2}$$
for all $\bfx \in B_\tau (\bfx_0)$ and $\t\in \calF_2$. Moreover, the map $\bfx\mapsto \nabla G_\t (\bfx)$ is a $C^{\ell-1}$-diffeomorphism on $\bar{B_{\tau}(\bfx_0)}$ for all $\t\in \calF_2$.
\end{lem}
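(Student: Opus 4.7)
The plan is to combine uniform-continuity arguments (powered by compactness of $\calF_2$) with the inverse function theorem, with every estimate made uniform in $\t \in \calF_2$. First, I fix a relatively compact open neighbourhood $\calF_2^+$ of $\calF_2$ with $\overline{\calF_2^+} \subseteq \calG_2$ and some $\tau_1 > 0$ with $\overline{B_{\tau_1}(\x_0)} \subseteq \calG_1$; this gives me a single compact product $\overline{B_{\tau_1}(\x_0)} \times \overline{\calF_2^+}$ on which all continuous functions built from $G$ are uniformly continuous.

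\emph{Bounds on $|\det H_{G_\t}|$.} Since $G \in C^\ell(\calG)$ with $\ell \geq 2$, the map $(\x,\t) \mapsto \det H_{G_\t}(\x)$ is continuous on $\calG$. By Assumption \ref{assp2} it is nowhere zero on $\{\x_0\} \times \calF_2$, so by compactness of $\calF_2$ its absolute value attains a positive minimum and finite maximum there. Uniform continuity on $\overline{B_{\tau_1}(\x_0)} \times \overline{\calF_2^+}$ then lets me shrink $\tau \in (0,\tau_1]$ so that the desired two-sided bound $c_{\calF_2,1} \leq |\det H_{G_\t}(\x)| \leq c_{\calF_2,2}$ holds on $\overline{B_\tau(\x_0)} \times \calF_2$.

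\emph{Diffeomorphism claim.} By the previous step the Jacobian of $\x \mapsto \nabla G_\t(\x)$, namely $H_{G_\t}(\x)$, is invertible on an open neighbourhood of $\overline{B_\tau(\x_0)}$ for every $\t \in \calF_2$, so $\nabla G_\t$ is locally a $C^{\ell-1}$-diffeomorphism. The only remaining task is global injectivity on $\overline{B_\tau(\x_0)}$, uniform in $\t$. For this I use the identity
$$\nabla G_\t(\x) - \nabla G_\t(\y) \;=\; \left( \int_0^1 H_{G_\t}(\y + s(\x-\y))\,\d s \right)(\x-\y)$$
and split $H_{G_\t}(\z) = H_{G_\t}(\x_0) + (H_{G_\t}(\z) - H_{G_\t}(\x_0))$. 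Continuity of $\t \mapsto H_{G_\t}(\x_0)^{-1}$ on the compact set $\calF_2$ yields a uniform bound $\|H_{G_\t}(\x_0)^{-1}\| \leq M$; uniform continuity of the Hessian on $\overline{B_{\tau_1}(\x_0)} \times \overline{\calF_2^+}$ allows me to shrink $\tau$ further so that $\|H_{G_\t}(\z) - H_{G_\t}(\x_0)\| \leq (2M)^{-1}$ for all $\z \in \overline{B_\tau(\x_0)}$ and $\t \in \calF_2$. The reverse triangle inequality then gives
$$|\nabla G_\t(\x) - \nabla G_\t(\y)| \;\geq\; \tfrac{1}{M}|\x-\y| - \tfrac{1}{2M}|\x-\y| \;=\; \tfrac{1}{2M}|\x-\y|,$$
proving injectivity of $\nabla G_\t$ on $\overline{B_\tau(\x_0)}$ uniformly in $\t$. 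Combined with local invertibility of the Jacobian, the inverse function theorem upgrades $\nabla G_\t$ to a $C^{\ell-1}$-diffeomorphism onto its image.

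\emph{Main obstacle.} The whole argument is standard once one notices that the only real subtlety is making the injectivity radius independent of $\t$; this is exactly where compactness of $\calF_2$ is essential, both to get the uniform bound $M$ on $\|H_{G_\t}(\x_0)^{-1}\|$ and to upgrade continuity of the Hessian to uniform continuity on the compact product.
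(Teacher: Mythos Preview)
Your proof is correct, but it proceeds along a different line from the paper's. The paper lifts the problem to the product space by defining $\psi(\x,\t) = (\nabla G_\t(\x),\t)$, whose Jacobian determinant at $(\x,\t)$ equals $\det H_{G_\t}(\x)$; it then applies the inverse function theorem to $\psi$ in $\RR^{n+m}$ at each $(\x_0,\t)$, obtains local product neighbourhoods $B_{\varepsilon_\t}(\x_0)\times B_{\varepsilon_\t}(\t)$ on which $\psi$ is a diffeomorphism, covers $\calF_2$ by finitely many such $\t$-balls, and takes $\tau$ smaller than all the radii. Uniformity in $\t$ thus comes from the finite cover and from the product structure of $\psi$. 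Your argument instead stays in $\RR^n$ and obtains uniformity directly: a uniform bound on $\|H_{G_\t}(\x_0)^{-1}\|$ together with uniform continuity of the Hessian on a compact product yields the explicit bi-Lipschitz estimate $|\nabla G_\t(\x)-\nabla G_\t(\y)|\geq (2M)^{-1}|\x-\y|$. The paper's route is shorter to state and sidesteps any explicit computation, while yours is more quantitative; in fact your bi-Lipschitz bound is precisely the content of Lemma~\ref{lem more lemma} later in the paper (proved there by the same Taylor/mean-value device), so your approach to the present lemma effectively delivers that later result for free.
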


\begin{proof}
We define a function $\psi: \mathbb{R}^{n +m} \rightarrow \mathbb{R}^{n + m}$ by
\begin{eqnarray}
\psi (\x,\t)= ( \nabla G_\t (\x),\t)=
\left( \frac{\partial G_\t}{\partial x_1}(\mathbf{x}) , \ldots,
\frac{\partial G_{\t}}{\partial x_n}(\mathbf{x}) , \t\right).
\end{eqnarray}
Clearly the determinant of the Jacobian matrix of $\psi$ at $(\x, \t)$ is $\det H_{G_{\t}} (\x)$.
Let $\t \in \calG_2$. Then by the inverse function theorem we can find $\vareps_\t>0$ such that
$|\det H_{G_{\t}} (\x)| > 0$ $( (\x, \t) \in  B_{\vareps_\t}(\bfx_0) \times B_{\vareps_\t}(\t) )$
and $\psi$ is a $C^{\ell-1}$-diffeomorphism on $B_{\vareps_\t}(\bfx_0) \times B_{\vareps_\t}(\t)$.
Since $\calF_2$ is compact we can find points $\t_1,\ldots, \t_v\in \calF_2$ such that
$$\calF_2\subseteq \bigcup_{1\leq i\leq v} B_{\vareps_{\t_i}}(\t_i).$$
Let
$$0<\tau < \min_{1\leq i\leq v} \frac{\vareps_{\t_i}}{2}.$$
It is easy to see that the first part of the lemma follows with this choice of $\tau$.
Given any $\t \in \calF_2$ there exists $1\leq i\leq v$ such that $\t \in B_{\vareps_{\t_i}}(\t_i)$ and $\psi$ is a $C^{\ell-1}$-diffeomorphism on $B_{\vareps_{\t_i}}(\bfx_0)\times B_{\vareps_{\t_i}}(\t_i)$. It follows by the shape of the map $\psi$ that $\psi (\cdot, \t)$ is a $C^{\ell-1}$-diffeomorphism on $\bar{B_\tau(\bfx_0)}$.
\end{proof}

\begin{lem}\label{ct2}
Let $G \in C^{\ell}(\calG)$, $\ell \geq 2$, and assume that $G$ satisfies Assumption \ref{assp2}.
Let  $\calF_2$ and $\tau$ be as in Lemma \ref{ct1}. 
Then for any $0<\kap<\tau$ sufficiently small, there exists $\rho>0$ such that
$$\dist \left( \partial \left(\nabla G_\t ( B_\tau(\bfx_0))\right), \partial \left( \nabla G_\t (B_\kap (\bfx_0))\right)\right) \geq 2\rho$$
for all $\t\in \calF_2$.
\end{lem}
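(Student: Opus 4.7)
The plan is to exploit Lemma \ref{ct1}, which provides a fixed $\tau>0$ on which $\nabla G_\t$ is a $C^{\ell-1}$-diffeomorphism of $\overline{B_\tau(\bfx_0)}$ onto its image for every $\t\in\calF_2$. Fix any $0<\kap<\tau$. Since the restriction of $\nabla G_\t$ to the compact box $\overline{B_\tau(\bfx_0)}$ is a homeomorphism onto its image, the set $\nabla G_\t(B_\kap(\bfx_0))$ is open with closure $\nabla G_\t(\overline{B_\kap(\bfx_0)})$, and hence
$$\partial\bigl(\nabla G_\t(B_\kap(\bfx_0))\bigr)=\nabla G_\t(\partial B_\kap(\bfx_0)),$$
with the analogous identity for $\tau$ in place of $\kap$. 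Thus the claim reduces to showing
$$\inf_{\t\in\calF_2}\dist\bigl(\nabla G_\t(\partial B_\tau(\bfx_0)),\,\nabla G_\t(\partial B_\kap(\bfx_0))\bigr)>0.$$

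I would establish this positivity by contradiction using compactness. Suppose the infimum vanishes; then there exist sequences $\t_i\in\calF_2$, $\bfx_i\in\partial B_\tau(\bfx_0)$ and $\bfy_i\in\partial B_\kap(\bfx_0)$ with $|\nabla G_{\t_i}(\bfx_i)-\nabla G_{\t_i}(\bfy_i)|\to 0$. Compactness of $\calF_2$, $\partial B_\tau(\bfx_0)$ and $\partial B_\kap(\bfx_0)$ lets us pass to subsequences converging to some $\t_*\in\calF_2$, $\bfx_*\in\partial B_\tau(\bfx_0)$ and $\bfy_*\in\partial B_\kap(\bfx_0)$. Since $G\in C^{\ell}(\calG)$ with $\ell\geq 2$ and $\calF_2\subseteq\calG_2$, the map $(\bfx,\t)\mapsto\nabla G_\t(\bfx)$ is continuous on a neighbourhood of $\overline{B_\tau(\bfx_0)}\times\calF_2$, so passing to the limit yields $\nabla G_{\t_*}(\bfx_*)=\nabla G_{\t_*}(\bfy_*)$. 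By the injectivity part of Lemma \ref{ct1}, this forces $\bfx_*=\bfy_*$, contradicting the fact that $\partial B_\tau(\bfx_0)\cap\partial B_\kap(\bfx_0)=\emptyset$ (the two sets are concentric $L^\infty$-boxes of different radii $\kap<\tau$). Hence the infimum is a positive number, which we denote $2\rho$.

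The main obstacle here is not computational but set-theoretic bookkeeping: to equate $\partial(\nabla G_\t(B_\bullet(\bfx_0)))$ with $\nabla G_\t(\partial B_\bullet(\bfx_0))$ one must use that $\nabla G_\t$ is a genuine homeomorphism on the \emph{closed} box (not merely a local diffeomorphism), and to pass to the limit one must invoke joint continuity of $\nabla G$ in $(\bfx,\t)$ on the compact slab $\overline{B_\tau(\bfx_0)}\times\calF_2$. Both are immediate from Lemma \ref{ct1} together with the $C^{\ell}$-regularity of $G$, so no further curvature or quantitative input beyond what Lemma \ref{ct1} supplies is required.
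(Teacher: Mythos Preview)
Your proof is correct and follows essentially the same route as the paper's: reduce to the image of the box boundaries via the homeomorphism property from Lemma~\ref{ct1}, and then use a compactness argument together with the injectivity of $\nabla G_{\t}$ on $\overline{B_\tau(\bfx_0)}$ to rule out distance zero. The only cosmetic difference is that the paper phrases the compactness step by showing that the minimum over $\t\in\calF_2$ is attained (so distance zero would force an actual intersection of the two image boundaries), whereas you phrase it via convergent subsequences; these are equivalent formulations of the same idea. Incidentally, your argument makes transparent that any $0<\kap<\tau$ works, so the qualifier ``sufficiently small'' in the statement is not actually needed.
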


\begin{proof}
Let $\t\in \calF_2$ and $0< \vareps<\tau$. Since $\nabla G_\t$ is a diffeomorphism on $\overline{B_\tau(\bfx_0)}$, we have $\partial ( \nabla G_\t (B_\tau(\bfx_0))) = \nabla G_\t (\partial B_\tau(\bfx_0))$
and $\partial (  \nabla G_\t( B_z (\x_0) )) =  \nabla G_\t (\partial B_z (\x_0) )$ for any $z \in [0, \varepsilon)$.
Let
$$
L(z) = \min_{\t\in \calF_2 } \textnormal{dist} \left( \nabla G_\t (\partial B_\tau(\bfx_0)), \nabla G_\t (\partial B_z (\x_0) ) \right),
$$
where $\partial B_{0} (\x_0) = \{\x_0\}$.

First we justify that $L: [0, \ve) \rightarrow [0, \infty)$ is a well-defined function. For each fixed $z \in [0, \ve)$ we have that
\begin{eqnarray}
\notag
\textnormal{dist} \left( \nabla G_\t (\partial B_\tau(\bfx_0)), \nabla G_\t (\partial B_z (\x_0) ) \right)
= \min_{ \substack{ \x \in \partial B_\tau(\bfx_0) \\  \x' \in \partial  B_z (\x_0) } }   |  \nabla G_\t (\x)  -  \nabla G_\t (\x')    |
\end{eqnarray}
is a continuous function in $\t$, because $| \nabla G_\t (\x)  -  \nabla G_\t (\x')   |$ is a continuous function
in $(\x, \x', \t)$ and $\partial B_\tau(\bfx_0) \times \partial B_z (\x_0)$ is compact.
Therefore, since $\calF_2$ is compact it follows that $\min_{\t\in \calF_2 }$ exists in the definition of $L(z)$.\par
It is clear that $L(0) > 0$.
If there exists $z \in  (0, \ve) $ such that $L(z) > 0$ then we are done.
Thus let us suppose otherwise, i.e. $L(z) = 0$ for all $z \in (0, \ve)$. Then there exist $\t \in \calF_2$ and $z \in (0, \ve)$ such that
$$
\nabla G_\t (\partial B_\tau(\bfx_0)) \cap \nabla G_\t (\partial B_z (\x_0) ) \not = \emptyset,
$$
but this is not possible because
$\overline{  B_z (\x_0)  } \subseteq B_\tau(\bfx_0)$  
and $\nabla G_\t$
is a diffeomorphism on $\overline{B_\tau(\bfx_0)}$.
\end{proof}

\section{Setting up the proof of Theorem \ref{main upper bound thm}}\label{beginproof}
Given $0 < \delta \leq 1/2$ we let
\begin{eqnarray}
\label{defchi}
\chi_{\delta}(\theta) 
=
\left\{
    \begin{array}{ll}
         1
         &\mbox{  if  }\| \theta \| \leq \delta \\
         0
         &\mbox{  otherwise.}
    \end{array}
\right.
\end{eqnarray}
We consider the Selberg magic functions (see \cite{Montgomery}) for the interval $[-\del,\del]\subseteq \R/\Z$ and denote them by
$$S_J^{\pm}(x)=\sum_{|j|\leq J} \hat{S}_J^{\pm}(j) e(jx),$$
where $J \in \NN$ is a parameter to be chosen in due course. They have the properties that
$$S_J^-(\tet)\leq \chi_{\delta}(\tet) \leq S_J^+(\tet) \quad (\tet\in \R/\Z)
\quad
\textnormal{ and }
\quad \hat{S}_J^{\pm}(0)=2\del\pm \frac{1}{J+1},$$
and obey the bound
\begin{equation}
\label{defb_j}
|\hat{S}_J^{\pm}(j)|\leq b_j :=  \frac{1}{J+1}+\min\left(2\del, \frac{1}{\pi |j|}\right)
\end{equation}
for all $0\leq |j|\leq J$.

We rewrite our counting function $\mathcal{N}_w(Q, \delta)$ as
$$\mathcal{N}_w(Q, \delta)=
\sum_{\substack{ \a \in \mathbb{Z}^n \\ q \leq Q } } w \left( \frac{\a}{q} \right)
\prod_{r = 1}^R
\chi_{\delta}\left(  q f_r \left( \frac{\a}{q} \right) \right).$$
By using the Selberg magic functions as an upper bound for each of the characteristic functions, we obtain
\begin{eqnarray}
\mathcal{N}_w(Q, \delta)
&=&
\sum_{\substack{ \a \in \mathbb{Z}^n \\ q \leq Q } } w \left( \frac{\a}{q} \right)
\prod_{r = 1}^R
\chi_{\delta}\left(  q f_r \left( \frac{\a}{q} \right) \right)
\\
\notag
&\leq &
\sum_{\substack{ \a \in \mathbb{Z}^n \\ q \leq Q } } w \left( \frac{\a}{q} \right)
\prod_{r = 1}^R
S_J^+\left(  q f_r \left( \frac{\a}{q} \right) \right)
\\
\notag
&=&
\sum_{\substack{ \a \in \mathbb{Z}^n \\ q \leq Q } }w \left( \frac{\a}{q} \right)
\prod_{r=1}^R \left(\sum_{j_r = -J}^{J}
\hat{S}_J^+(j_r)
e \left( j_r q f_r \left( \frac{\a}{q} \right) \right) \right)\\
\notag
&=&
\sum_{\substack{ \a \in \mathbb{Z}^n \\ q \leq Q } }w \left( \frac{\a}{q} \right)
\sum_{ \substack{ 0 \leq |j_1| \leq J  \\ \vdots \\ 0 \leq |j_R| \leq J  }  }
\left(\prod_{r=1}^R\hat{S}_J^+(j_r)\right)
e \left(\sum_{r=1}^R j_r q f_r \left( \frac{\a}{q} \right) \right) \\
&=&
\notag
\sum_{ \substack{ 0 \leq |j_1| \leq J  \\ \vdots \\ 0 \leq |j_R| \leq J  }  }
\left(\prod_{r=1}^R\hat{S}_J^+(j_r)\right)
\sum_{\substack{ \a \in \mathbb{Z}^n \\ q \leq Q } }w \left( \frac{\a}{q} \right)
e \left(\sum_{r=1}^R j_r q f_r \left( \frac{\a}{q} \right) \right).
\end{eqnarray}
The contribution from the terms with $j_1 = \cdots = j_R = 0$  is
\begin{equation*}
\left(2\del+\frac{1}{J+1}\right)^R N_0 = (2\del)^R N_0 + O\left( \del^{R-1}\frac{1}{J} Q^{n+1}+\frac{1}{J^R}Q^{n+1}\right),
\end{equation*}
where the implicit constant may depend on $R$ and an upper bound for the diameter of $\supp w$.
We can obtain a lower bound for $\mathcal{N}_w(Q, \delta)$ in a similar manner.
Therefore, we obtain
\begin{eqnarray}
\label{(3.5)}
\left| \mathcal{N}_w(Q, \delta) - (2\del)^R N_0\right| \ll \del^{R-1}\frac{1}{J} Q^{n+1}+\frac{1}{J^R}Q^{n+1} \\
\notag
+ \sum_{ \substack{ 0 \leq |j_1| \leq J  \\ \vdots \\ 0 \leq |j_R| \leq J\\ \mathbf{j}\neq \mathbf{0}  }  }
\left(\prod_{r=1}^R b_{j_r}\right)\left|
\sum_{\substack{ \a \in \mathbb{Z}^n \\ q \leq Q } } w \left( \frac{\a}{q} \right)
e \left(\sum_{r=1}^R j_r q f_r \left( \frac{\a}{q} \right) \right)\right| .
\end{eqnarray}
By the $n$-dimensional Poisson summation formula we obtain
\begin{eqnarray}
&&\sum_{\substack{ \a \in \mathbb{Z}^n} }
w \left( \frac{\a}{q} \right)
e \left(  \sum_{r=1}^R j_r q f_r \left( \frac{\a}{q} \right)  \right)
\\
\notag
&=&
\sum_{\substack{ \k \in \mathbb{Z}^n } }
\int_{\mathbb{R}^n}
w \left( \frac{\z}{q} \right)
e \left(  \sum_{r=1}^R j_r q f_r \left( \frac{\z}{q} \right)  - \k\cdot \z \right) d\z
\\
\notag
&=&
q^n \sum_{\substack{ \k \in \mathbb{Z}^n } } I(q;  \j; \k ),
\end{eqnarray}
where
$$
I(q; \j; \k ) = \int_{\mathbb{R}^n}
w \left( \x \right)
e \left( \sum_{r=1}^R q  j_r  f_r (\x)  - q \k\cdot \x   \right) d\x.
$$
Therefore, in order to bound the last term in (\ref{(3.5)}) it suffices to obtain an upper bound for
\begin{eqnarray}
\label{N^1}
N^{(r ; \boldsymbol{\epsilon} )}(Q, \delta) = \sum_{ \substack{  1 \leq j_r \leq J  \\ 0 \leq j_s \leq j_r  \\ (s \neq r)     } }
\left(\prod_{r=1}^R b_{j_r}\right)
\left| \sum_{\substack{  q \leq Q } } q^n \sum_{\substack{ \k \in \mathbb{Z}^n  } }  I(q; (\epsilon_1 j_1, \ldots, \epsilon_R j_R) ; \k  ) \right|
\end{eqnarray}
for each $1 \leq r \leq R$ and $\boldsymbol{\epsilon} \in \{-1, 1 \}^{R}$.
Since the arguments are identical after relabeling, we only present the details for bounding $N^{(1; (1, \ldots, 1) )}(Q, \delta)$ in this article;
this is achieved in Section \ref{easycase}. We note that
the same upper bound obtained for $N^{(1; (1, \ldots, 1) )}(Q, \delta)$ holds for $N^{(r ; \boldsymbol{\epsilon} )}(Q, \delta)$ $(1 \leq r \leq R, \boldsymbol{\epsilon} \in \{-1, 1 \}^{R})$ as well.

\section{Proof of Theorem \ref{main upper bound thm}}
\label{easycase}

In this section we obtain an upper bound for $N^{(1; (1, \ldots, 1) )}(Q, \delta)$.
Let us write
$$
I(q; \j; \k ) = \int_{\mathbb{R}^n}
w \left( \x \right)
e \left( q j_1 \left(  F_{\j}( \x)  - \frac{\k\cdot \x}{j_1}  \right) \right) d\x,
$$
where $j_1\neq 0$ and
$$
F_{\j} = f_1 + \frac{j_2}{j_1}f_2 + \cdots + \frac{j_R}{j_1}f_R.
$$ 
Let $\t = (t_2, \ldots, t_R)$ and
$$G(\bfx, \t) = f_1(\x) +\sum_{r=2}^R t_r f_r (\bfx).$$
We apply Lemma \ref{ct1} and \ref{ct2} to the function $G(\bfx, \t)$ and the compact set $[0,1]^{R-1}$. We deduce that there are constants $\tau_{(1; (1, \ldots, 1))} >0$ and $c_1,c_2>0$ such that
\begin{equation}
\label{lowerbdd'}
c_1\leq |\det H_{f_1+\sum_{r=2}^R t_r f_r}(\bfx)|\leq c_2
\end{equation}
for all $\bfx\in B_{2 \tau_{(1; (1, \ldots, 1))}} (\bfx_0)$ and $\bft\in [0,1]^{R-1}$. Moreover, the map
$$\bfx \mapsto \nabla \left(f_1+\sum_{r=2}^R t_r f_r\right) (\bfx)$$
is a $C^{\ell-1}$-diffeomorphism on $\bar{ B_{2 \tau_{(1; (1, \ldots, 1))}} (\bfx_0)}$ for all $\bft\in [0,1]^{R-1}$.
Let us define $\tau_{(r; \boldsymbol{\epsilon})}$ analogously for each case $(r; \boldsymbol{\epsilon})$
$(1 \leq r \leq R, \boldsymbol{\epsilon} \in \{ \pm 1\}^R)$ and let
$$
0 < \tau \leq  \min_{\substack{ 1 \leq r \leq R \\ \boldsymbol{\epsilon} \in \{ \pm 1\}^R }}  \tau_{(r; \boldsymbol{\epsilon})}.
$$
We let $\tau$ be sufficiently small such that Lemma \ref{lem more lemma} is applicable, and 
choose $\varepsilon_0 > 0$ in the statement of Theorem \ref{main upper bound thm}
to be smaller than $2 \tau$.
By Lemma \ref{ct2} it follows
that there exist constants $0 < \kap < \tau$ and $\rho >0$ such that
\begin{eqnarray}
\notag
\textnormal{dist} \left( \partial  \left( \nabla ( f_1  + \sum_{r=2}^Rt_r f_r ) (B_{\tau}(\bfx_0)) \right), \partial  \left( \nabla ( f_1  + \sum_{r=2}^R t_r f_r ) (B_\kap (\bfx_0)) \right)  \right)
\geq 2 \rho
\\
\label{defnrho}
\end{eqnarray}
for all $\bft\in [0,1]^{R-1}$.

Let $\calD = B_\tau (\bfx_0)$ and let $w \in C_0^{\infty}(\mathbb{R}^n)$
be a non-negative weight function such that 
$$
U := \supp w = \{ \x \in \mathbb{R}^n :   w(\x) \not = 0  \} \subseteq B_\kap(\bfx_0).
$$
We define
$$
V_{\j} = \nabla F_{\j} (U)
\quad
\textnormal{ and }
\quad
\mathcal{R}_{\j} = \nabla F_{\j} (\mathcal{D}).
$$
Since $0 \leq j_r / j_1 \leq 1$ $(2 \leq r \leq R)$, we know that $\nabla F_{\j}$ is a diffeomorphism on $U$ and $\mathcal{D}$. Let $L \in \mathbb{N}$ be such that
\begin{eqnarray}
\label{defL}
V_{\j} \subseteq [- L, L]^{n}
\end{eqnarray}
for all $1 \leq j_1 \leq J$, $0 \leq  j_2, \ldots, j_R \leq  j_1$. Note that we can choose $L$ independently of $J$.\par
We split the set of $\k \in \ZZ^n$ into three disjoint subsets.
Let
$$
\mathcal{K}_{\j; 1} = \{ \k \in \mathbb{Z}^n :  \frac{\k}{j_1} \in V_{\j} \},
$$
$$
\mathcal{K}_{\j; 2} = \{ \k \in \mathbb{Z}^n : \textnormal{dist} \left( \frac{\k}{j_1}, V_{\j} \right) \geq \rho \}
$$
and
$$
\mathcal{K}_{\j; 3} = \mathbb{Z}^n \setminus (\mathcal{K}_{\j; 1} \cup \mathcal{K}_{\j; 2}).
$$
For each $1 \leq i \leq 3$, we let
\begin{eqnarray}
N_i =
\sum_{\substack{  \\ 1 \leq j_1 \leq J \\  0 \leq  j_2, \ldots, j_R \leq  j_1  }}
\left(\prod_{r=1}^R b_{j_r}\right)
\left|  \sum_{\substack{  q \leq Q } } q^n \sum_{\substack{ \k \in \mathcal{K}_{\j; i}  } } I(q; \j; \k) \right|
\end{eqnarray}
so that
\begin{eqnarray}
N^{(1; (1, \ldots, 1) )}(Q, \delta) \ll  N_1 + N_2 + N_3.
\end{eqnarray}

We now bound each $N_i$ separately.
\subsection{Case $\k \in \mathcal{K}_{\j; 2}$}
\label{sec5.1}
In this case we let
$$
\phi_1(\x) =  \frac{ j_1  f_1 (\x) +  \cdots  +  j_R f_R ( \x)  - \k\cdot \x}{\textnormal{dist} (\k, j_1 V_{\j} )},
$$
where $\k\cdot \x = k_1 x_1 + \cdots + k_n x_n$,
and
$$
\lambda_1 = q \cdot \textnormal{dist} (\k, j_1 V_{\j} ).
$$
Then it follows from the definition of $V_{\j}$ that
$$
|\nabla \phi_1 (\x)|  = \frac{ |  j_1  \nabla f_1 (\x) +  \cdots  +   j_R \nabla f_R ( \x)  - \k  | }{\textnormal{dist} (\k, j_1 V_{\j} )} \geq 1
\quad
(\x \in U).
$$
Let $U_+ \subseteq \RR^n$ be an open set such that $\overline{U} \subseteq U_+ \subseteq B_{\tau} (\x_0)$,
$$
V_{\j +} = \nabla F_{\j} (U_+) \subseteq [-2L, 2L]^n \quad (1 \leq j_1 \leq J, 0 \leq j_2, \ldots, j_R \leq j_1),
$$
\begin{eqnarray}
\min_{\y \in \overline{U}}  |\x - \y|
\label{Vj+}
< \frac{ \dist (\partial \mathcal{D}, \partial U) }{4} \quad (\x \in U_+),
\end{eqnarray}
and
$$
\min_{\y \in \overline{U}} \max_{ \t \in [0,1]^{R-1} } \left| \nabla (f_1 + \sum_{r=2}^R t_r f_r) (\x) - \nabla (f_1 + \sum_{r=2}^R t_r f_r) (\y) \right|
< \frac{\rho}{2} \quad (\x \in U_+).
$$
Thus we obtain
$$
|\nabla \phi_1 (\x)|  \geq \frac12
\quad
(\x \in U_+).
$$

Next we need upper bounds for the derivatives of $\phi_1$.
\begin{lem}\label{Claim1}
Given $i_1, \ldots, i_n \in \ZZ_{\geq 0}$ with $\sum_{m=1}^n i_m\leq \ell$ we have
$$
\left| \frac{\partial^{i_1 + \cdots + i_n}  \phi_1}{ \partial x_1^{i_1} \cdots \partial x_n^{i_n} }  (\x)  \right| \ll 1 \quad  (\x \in U_+),
$$
where the implicit constant depends only on $(i_1,\ldots, i_n)$, $\rho$ and
upper bounds for (the absolute values of) finitely many derivatives of $f_r$ $(1\leq r\leq R)$ on $U_+$. In particular, it is independent of $\j$ and $\k.$ 
\end{lem}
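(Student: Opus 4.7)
The cleanest first move is to divide numerator and denominator of $\phi_1$ by $j_1$, rewriting
\[
\phi_1(\x) \;=\; \frac{1}{\dist(\k/j_1,\, V_{\j})}\Bigl( F_{\j}(\x) \;-\; \tfrac{\k}{j_1}\cdot \x\Bigr).
\]
Since $\k\in \mathcal{K}_{\j;2}$, the overall prefactor $1/\dist(\k/j_1, V_{\j})$ is bounded above by $1/\rho$, a constant depending only on the geometry set up in (\ref{defnrho}). Moreover $F_{\j} = f_1 + \sum_{r=2}^{R}(j_r/j_1) f_r$ is a sum of at most $R$ terms with coefficients in $[0,1]$, so any partial derivative of $F_{\j}$ on $U_+$ is uniformly bounded (independently of $\j$) by $R$ times the corresponding derivative of the $f_r$'s on $U_+$.

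Next I would split into cases by the order $s := i_1 + \cdots + i_n$. The case $s \geq 2$ is the easy one: such a derivative kills the linear term $(\k/j_1)\cdot\x$ entirely, and what remains is the partial derivative of $F_{\j}$ divided by $\dist(\k/j_1,V_{\j})\geq \rho$; the numerator is uniformly bounded by the remark above, and there is nothing more to say. The case $s=0$ is handled directly: $|F_{\j}(\x)|$ is bounded on $U_+$ by $R\max_r\|f_r\|_{L^\infty(U_+)}$, while $|(\k/j_1)\cdot\x|/\dist(\k/j_1,V_{\j})$ is bounded by picking a nearest point $\y \in V_{\j}$ to $\k/j_1$ and writing $|\k/j_1|\leq |\y|+\dist(\k/j_1,V_{\j}) \leq L + \dist(\k/j_1,V_{\j})$, so that the ratio is at most $L/\rho+1$ times $|\x_0|+\tau$.

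The only case requiring genuine care, and the main (mild) obstacle, is $s = 1$. Here I need to bound
\[
\frac{\bigl|\nabla F_{\j}(\x) - \k/j_1\bigr|}{\dist(\k/j_1,V_{\j})}
\]
uniformly in $\j$ and $\k$. For this I pick a nearest point $\y \in V_{\j}$ to $\k/j_1$ and apply the triangle inequality:
\[
\bigl|\nabla F_{\j}(\x) - \k/j_1\bigr| \;\leq\; \bigl|\nabla F_{\j}(\x) - \y\bigr| + |\y - \k/j_1| \;\leq\; \diam(V_{\j+}) + \dist(\k/j_1, V_{\j}).
\]
Here $\nabla F_{\j}(\x) \in V_{\j+}$ by the definition of $U_+$, and $\y\in V_{\j}\subseteq V_{\j+}$. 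Since $V_{\j+}\subseteq [-2L,2L]^n$ uniformly in $\j$ (by the choice of $L$ and $U_+$ in (\ref{defL})--(\ref{Vj+})), $\diam(V_{\j+})\leq 4L$. Dividing through yields a bound by $4L/\rho + 1$, again independent of $\j$ and $\k$.

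Combining the three cases, every derivative of $\phi_1$ of order at most $\ell$ is bounded on $U_+$ by a constant depending only on the quantities listed in the statement, which is precisely the claim.
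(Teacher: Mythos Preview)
Your proof is correct and follows the same overall structure as the paper's: rewrite $\phi_1$ with the normalized denominator $\dist(\k/j_1, V_{\j})\geq \rho$, observe that derivatives of $F_{\j}$ are uniformly bounded since the coefficients $j_r/j_1$ lie in $[0,1]$, and treat the orders $s\geq 2$, $s=1$, $s=0$ separately, with the linear term vanishing for $s\geq 2$.

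The one genuine difference is in the $s=0$ case. The paper splits into two subcases according to whether $|\k|\geq Cj_1$ or $|\k|<Cj_1$ for a suitably chosen constant $C$; in the former subcase it renormalizes by $|\k|$ rather than $j_1$ and argues that $\dist(\k/|\k|, j_1V_{\j}/|\k|)\geq \tfrac12$. Your single triangle-inequality estimate $|\k/j_1|\leq L+\dist(\k/j_1,V_{\j})$ handles both regimes at once and is cleaner; it buys you a shorter proof with no case distinction and no need to introduce the auxiliary constant $C$. The paper's version, on the other hand, makes slightly more explicit why the bound degrades gracefully as $|\k|\to\infty$. Either way the conclusion and the dependence of the implicit constant are identical.
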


\begin{proof}
First we show that $|\phi_1 (\x)|$ is bounded independently of the choice of $\j$ and $\k$.
To see this we let $C > 0$ be such that
$$
\frac{1}{C} \max_{ \substack{ \t \in [0,1]^{R-1} \\ \mathbf{x} \in \overline{U}  }  } \   | \nabla (f_1 + t_2 f_2 + \cdots + t_R f_R ) (\x) | < \frac12.
$$
Suppose $|\k| \geq C j_1$. Then we have
$$
\left| \frac{\k}{|\k|}- \frac{j_1 \y}{|\k|} \right| \geq
1 - \frac{|j_1 \y|}{|\k|} > \frac12 \quad (\y \in V_{\j}),
$$
and hence,
$$
\textnormal{dist} \left( \frac{\k}{|\k|}, \frac{j_1 V_{\j}}{|\k|} \right) \geq
\frac12.
$$
Therefore, it follows that
$$
|\phi_1(\x)| =\left| \frac{ \frac{j_1}{|\k|}  f_1 (\x) + \cdots  + \frac{j_R}{|\k|} f_R ( \x)  - \frac{\k\cdot \x}{|\k|}}{\textnormal{dist} (\frac{\k}{|\k|}, \frac{j_1 V_{\j}}{|\k|} )} \right| \ll 1 \quad (\x \in U_+),
$$
where the implicit constant is independent of $\j$  (since $j_1 \geq 1$  and $0 \leq  j_2, \ldots, j_R \leq j_1$) and $\k$. On the other hand, suppose $|\k| < C j_1$.
Then by the definition of $\mathcal{K}_{\j; 2}$ we have
$$
|\phi_1(\x)| \leq \left| \frac{  f_1 (\x) + \frac{j_2}{j_1} f_2 ( \x) + \cdots + \frac{j_R}{j_1} f_R ( \x) - \frac{\k \cdot \x}{j_1} }{\rho} \right| \ll 1
\quad (\x \in U_+),
$$
where the implicit constant is independent of $\j$ and $\k$.
For the first partial derivatives, we can obtain the same conclusion by a similar argument. For higher partial derivatives,
the term $\k \cdot \x$ of $\phi_1(\x)$ vanishes and the desired conclusion can be deduced easily.
\end{proof}
Therefore, it follows from Lemma \ref{Lem1} (with $\phi_1$ and $\lambda_1$) that
$$
I(q; \j; \k) \ll \lambda_1^{- \ell + 1} = (q \cdot \textnormal{dist} (\k, j_1 V_{\j} ))^{- \ell + 1},
$$
where the implicit constant is independent of $\j$ and $\k$.
As a result,  since $\ell - 1 - n \geq  1$ we obtain
\begin{eqnarray}
\label{(4.4)}
\sum_{\k \in \mathcal{K}_{\j; 2}} I(q; \j; \k) &\ll&   q^{-\ell + 1} \sum_{\k \in \mathcal{K}_{\j; 2}} \textnormal{dist} (\k, j_1 V_{\j} )^{- \ell + 1}
\\
\notag
&\leq&  q^{-\ell + 1} \sum_{d = 0}^{\infty} \sum_{
\substack{
\k \in \mathbb{Z}^n
\\
 2^d j_1 \rho  \leq \textnormal{dist} (\k, j_1 V_{\j} )
< 2^{d+1} j_1 \rho
}
  } \frac{ 1 }{(2^d j_1 \rho)^{\ell-1}}
\\
&\ll&  q^{-\ell + 1} \sum_{d = 0}^{\infty}  \frac{(L  j_1 + 2^{d+1} j_1 \rho)^{n}  }{(2^{d} j_1 \rho)^{\ell-1}}
\notag
\\
\notag
&\ll &   q^{-\ell + 1}.
\end{eqnarray}
Here the implicit constants depend only on $L$, $\rho$, $n$, $\ell$ and upper bounds for (the absolute values of) finitely many derivatives of $w$ and $f_r$ $(1\leq r\leq R)$ on $U_+$, but it does not depend on $j_1$.
Therefore, it follows that
\begin{eqnarray}
\label{finalbddN2}
N_2 &\ll& \sum_{\substack{1\leq j_1\leq J\\ 0\leq j_2,\ldots, j_R\leq j_1}}  \left(\prod_{r=1}^R \left(\frac{1}{J} +\min \left(\del, \frac{1}{j_r}\right)\right) \right)  \sum_{q\leq Q} q^{n- \ell +1} \\
\notag
& \ll & \log Q \left(\sum_{0\leq j\leq J}  \left(\frac{1}{J} +\min \left(\del, \frac{1}{j}\right) \right)\right)^R \\
\notag
& \ll &  \log Q \left( 1 + \log J\right)^R.
\notag
\end{eqnarray}

\subsection{Case $\k \in \mathcal{K}_{\j; 3}$}
\label{seck3}
Let $\lambda = q j_1$ and
$$
\phi(\x) = F_{\j} (\x)  - \frac{\k}{j_1} \cdot \x.
$$
Since $\nabla F_{\j}$ is a diffeomorphism on $\mathcal{D}$, it follows that for each fixed $\j$ we have that each $\k \in j_1 \mathcal{R}_{\j}$
determines a unique critical point $\x_{\j; \k}$ of $\phi$, i.e.
$$
\mathbf{0} =  \nabla \phi (\x_{\j; \k}) =
\nabla F_{\j} (\x_{\j;\k})  - \frac{\k}{j_1}.
$$
Then by (\ref{dual}) we have
\begin{eqnarray}
\label{(4.5)-1}
\phi(\x_{\j; \k}) = F_{\j}(\x_{\j;\k}) - \frac{\k}{j_1} \cdot \x_{\j;\k} = - F^*_{\j}\left( \frac{\k}{j_1} \right),
\end{eqnarray}
which we will make use of in the next section.
It also follows from the definition of $\mathcal{R}_{\j}$ that
$$
\x_{\j; \k} = ( \nabla F_{\j} )^{-1} (\k / j_1) \in \mathcal{D}.
$$
Let $\mathcal{D}_+$ be an open set such that $\overline{\mathcal{D}} \subseteq \mathcal{D}_+ \subseteq B_{3 \tau /2} (\x_0)$.
Recall we have set $\tau > 0$ to be sufficiently small. 
\begin{lem}
\label{lem more lemma}
We have
$$
\frac{| \x - \x_{\j;\k} |}{| \nabla \phi (\x) |} \ll 1 \quad (\x \in \mathcal{D}_+, \x\neq \x_{\j;\k} ),
$$
where the implicit constant is independent of $\j$ and $\k$.
\end{lem}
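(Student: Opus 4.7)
The plan is to reduce the lemma to a uniform bi-Lipschitz lower bound on the map $\nabla F_{\j}$. Since $\nabla \phi(\x_{\j;\k}) = \mathbf{0}$ and $\nabla \phi = \nabla F_{\j} - \k/j_1$, we have
\[
\nabla \phi(\x) = \nabla F_{\j}(\x) - \nabla F_{\j}(\x_{\j;\k}).
\]
The open box $B_{3\tau/2}(\x_0) \supseteq \mathcal{D}_+$ is convex and contains both $\x_{\j;\k} \in \mathcal{D}$ and $\x$, so the segment between them lies in $B_{3\tau/2}(\x_0) \subseteq B_{2\tau_{(1;(1,\ldots,1))}}(\x_0)$. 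Applying the fundamental theorem of calculus componentwise then gives
\[
\nabla \phi(\x) = M_{\j,\k,\x}(\x - \x_{\j;\k}), \qquad M_{\j,\k,\x} := \int_0^1 H_{F_{\j}}\bigl(\x_{\j;\k} + s(\x - \x_{\j;\k})\bigr)\, ds,
\]
where $M_{\j,\k,\x}$ is symmetric as an average of symmetric matrices. The lemma thus reduces to showing $\sigma_{\min}(M_{\j,\k,\x}) \geq c > 0$ uniformly in $\j, \k, \x$.

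To prove this uniform spectral bound, I would write $\bft = (j_2/j_1, \ldots, j_R/j_1) \in [0,1]^{R-1}$, so that $F_{\j} = f_1 + \sum_{r=2}^R t_r f_r$. By (\ref{lowerbdd'}) applied pointwise, $|\det H_{F_{\j}}(\y)| \geq c_1$ for every $\y$ in the segment, while $\|H_{F_{\j}}(\y)\|$ is bounded above by some $C_0$ depending only on upper bounds for the second derivatives of the $f_r$ on $B_{2\tau_{(1;(1,\ldots,1))}}(\x_0)$. The main step is to transfer these pointwise bounds to the average $M_{\j,\k,\x}$. The map $(\y, \bft) \mapsto H_{f_1 + \sum t_r f_r}(\y)$ is uniformly continuous on the compact set $\overline{B_{3\tau/2}(\x_0)} \times [0,1]^{R-1}$, so by choosing $\tau$ sufficiently small (as the preamble to Lemma \ref{lem more lemma} already permits) every Hessian appearing in the integral lies within any prescribed $\epsilon > 0$ of $H_{F_{\j}}(\x_{\j;\k})$ in operator norm. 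Picking $\epsilon$ small compared to $c_1$ then guarantees $|\det M_{\j,\k,\x}| \geq c_1/2$ and $\|M_{\j,\k,\x}\| \leq C_0 + 1$. For a symmetric matrix $M$ one has $|\det M| = \prod_i |\lambda_i(M)| \leq \|M\|^{n-1}\, \sigma_{\min}(M)$, which yields
\[
\sigma_{\min}(M_{\j,\k,\x}) \geq \frac{c_1}{2(C_0+1)^{n-1}} =: c.
\]
Therefore $|\nabla\phi(\x)| \geq c\,|\x - \x_{\j;\k}|$, which proves the lemma with implicit constant $1/c$, independent of $\j$ and $\k$.

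The only genuinely nontrivial step is the passage from pointwise control of the Hessians to control of their average $M_{\j,\k,\x}$: an average of well-conditioned invertible matrices need not itself be well-conditioned. What rescues the argument is the freedom to shrink $\tau$, which forces the integrand to be nearly constant along the short segment of integration, so that $M_{\j,\k,\x}$ is realised as a small perturbation of the single Hessian $H_{F_{\j}}(\x_{\j;\k})$, to which (\ref{lowerbdd'}) applies directly.
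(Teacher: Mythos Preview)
Your proof is correct and follows essentially the same strategy as the paper's. Both arguments reduce to a uniform bi-Lipschitz lower bound for $\nabla F_{\j}$ on $\overline{\mathcal{D}_+}$, obtained from the Hessian determinant bound (\ref{lowerbdd'}) together with the freedom to shrink $\tau$; the only real difference is that you use the exact integral identity $\nabla F_{\j}(\x)-\nabla F_{\j}(\y)=\bigl(\int_0^1 H_{F_{\j}}\bigr)(\x-\y)$ and then bound $\sigma_{\min}$ of the averaged matrix, whereas the paper uses the first-order Taylor expansion $H_{F_{\j}}(\y)(\x-\y)+O(|\x-\y|^2)$ and absorbs the remainder by restricting to $|\x-\y|\le\lambda_1$ (which is then enforced by taking $\tau$ small). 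Your version is marginally cleaner since it avoids the two-step ``small displacement, then small $\tau$'' argument, but the substance is identical. One small remark: when you invoke uniform continuity of $(\y,\bft)\mapsto H_{f_1+\sum t_rf_r}(\y)$ you should do so on a fixed compact set such as $\overline{B_{2\tau_{(1;(1,\ldots,1))}}(\x_0)}\times[0,1]^{R-1}$ rather than on $\overline{B_{3\tau/2}(\x_0)}\times[0,1]^{R-1}$, since $\tau$ is what you are about to choose.
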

\begin{proof}
Since
$$
\frac{| \x - \x_{\j;\k} |}{| \nabla \phi (\x) |}
=
\frac{| \x - ( \nabla F_{\j} )^{-1} (\k / j_1) |}{| \nabla F_{\j} (\x) - \frac{\k}{j_1} |}
$$
and $\k/j_1 \in \mathcal{R}_{\j}$,
it suffices to prove
\begin{equation}
\label{eqnlemmamore}
1 \ll \frac{| \x - \y |}{| \nabla F_{\j} (\x) - \nabla F_{\j} (\y) |} \ll 1 \quad (\x, \y \in \overline{\mathcal{D}_+}, \x\neq \y),
\end{equation}
where the implicit constant is independent of $\j$; the lower bound is not necessary here, but it will be needed in Section \ref{auxinequ}.
By Taylor's theorem we have
\begin{eqnarray}
\nabla F_{\j} (\x) - \nabla F_{\j} (\y) = H_{F_{\j}}(\y) \cdot (\x - \y) + O( | \x - \y|^2 ),
\label{ineq11}
\end{eqnarray}
where the implicit constant is independent of $\j$. Recall that given an invertible real symmetric $n \times n$ matrix $A$ we have
$$
\lambda_{\min}|\z| \ll_n |A \cdot \z| \quad (\z \in \RR^n),
$$
where $\lambda_{\min}$ is the minimum of the absolute values of the eigenvalues of $A$.
Thus from (\ref{ineq11}), (\ref{lowerbdd'}) and the fact that the eigenvalues of a matrix are continuous in the coefficients of the matrix,
it follows that there exist constants $C_1, \lambda_1 > 0$,
which are independent of $\j$, such that
\begin{eqnarray}
\label{eqn1lemma}
|\x - \y| \leq C_1 |\nabla F_{\j} (\x) - \nabla F_{\j} (\y)|
\end{eqnarray}
for all $| \x - \y | \leq \lambda_1$. The lower bound in (\ref{eqnlemmamore}) follows directly from (\ref{ineq11}).
\end{proof}
Since $\nabla F_{\j} (\x_{\j;\k}) = \k/j_1 \not \in V_{\j}$ we have $\x_{\j;\k} \not \in U = \supp w,$ i.e. $w(\x_{\j;\k}) = 0$.
The cardinality of $\mathcal{K}_{\j; 3}$ can be bounded as follows
\begin{eqnarray}
\label{cardK3-1}
\# \mathcal{K}_{\j; 3}  \leq
\sum_{  \substack{  \k \in \mathbb{Z}^n \\  \textnormal{dist} (\k/j_1, V_{\j} ) < \rho }} 1
\leq \sum_{  \substack{  \k \in \mathbb{Z}^n \\  \k/j_1 \in [-L  -\rho, L   + \rho]^n }} 1 \ll_{L, \rho} j_1^n.
\end{eqnarray}
Given $i_1, \ldots, i_n \in \ZZ_{\geq 0}$ with $\sum_{m=1}^n i_m \leq \ell$ we have
\begin{eqnarray}
\label{partials1}
\left| \frac{\partial^{i_1 + \cdots + i_n}  \phi}{ \partial x_1^{i_1} \cdots \partial x_n^{i_n} }  (\x)  \right| \ll 1 \quad (\x \in \mathcal{D}_+),
\end{eqnarray}
where the implicit constant depends only on $(i_1,\ldots, i_n)$, $\rho$ and
upper bounds for (the absolute values of) finitely many derivatives of $f_r$ $(1\leq r\leq R)$ on $\mathcal{D}_+$.
In particular, it is independent of $\j$ and $\k$.
We also have  $H_{\phi} = H_{F_{\j}}$.
Therefore, it follows from Lemma \ref{Lem2}, (\ref{lowerbdd'}) and (\ref{cardK3-1}) that
\begin{eqnarray}
\sum_{\k \in \mathcal{K}_{\j; 3} }
I(q; \j ; \k) \ll  j_1^n \lambda^{-\frac{n}{2} - 1} =  q^{-\frac{n}{2} - 1} j_1^{ \frac{n}{2} - 1 }.
\end{eqnarray}
Note that here we used $\ell > \frac{n}{2}+4$.
Consequently, by a similar calculation as in (\ref{finalbddN2}) we obtain
\begin{equation}
\label{finalbddN3}
N_3 \ll   J^{ \frac{ n }{ 2 } - 1} Q^{ \frac{ n }{ 2 } } (1+\log J)^{R}.
\end{equation}

\subsection{Case $\k \in \mathcal{K}_{\j; 1}$}
Let $\phi$ and $\lambda$ be as in Section \ref{seck3}. In particular, we have $(\ref{(4.5)-1})$ and Lemma \ref{lem more lemma}.
It can be verified that $\phi$ and its partial derivatives satisfy (\ref{partials1}).
The signature of $H_{\phi}(\x_{\j;\k}) = H_{F_{\j}}(\x_{\j;\k})$ is constant for all choices of $\j$ and $\k$ in consideration;
this follows from (\ref{lowerbdd'}) and the fact that the eigenvalues of a matrix are continuous in the coefficients of the matrix.
Let us denote by $\sigma$ the signature of $H_{\phi}(\x_{\j;\k})$. Then it follows from Lemma \ref{Lem2} and (\ref{lowerbdd'}) that
\begin{eqnarray}
I(q; \j; \k) &=& \frac{w(\x_{\j;\k})}{  \sqrt{ |  \det H_{ F_{\j} } (\x_{\j;\k})  | } } (q j_1)^{-\frac{n}{2}}
e \left(  - q j_1 F_{\j}^* \left(  \frac{\k}{j_1}  \right)   +   \frac{\sigma}{8} \right)
\\
\notag
&+&   O\left(  (q j_1)^{-\frac{n}{2} -1 }) \right),
\end{eqnarray}
where the implicit constant is independent of $\j$ and $\k$.
Let $w_{\j}^* = w \circ (\nabla F_{\j})^{-1}$.
Hence, for $\k \in \mathcal{K}_{\j; 1}$ we have by partial summation
\begin{eqnarray}
&&\sum_{q \leq Q} q^{n} I(q; \j; \k)
\\
\notag
&\ll&
\frac{w_{\j}^* \left( \frac{\k}{j_1} \right)}{  \sqrt{ |  \det H_{ F_{\j} } (\x_{\j;\k})  | }   }   j_1^{ -\frac{ n}{2}  } Q^{\frac{n}{2} } \min ( \|  j_1 F_{\j}^*(\k/j_1)  \|^{-1}   , Q )
+  j_1^{-\frac{n}{2} - 1} Q^{\frac{n}{2}}.
\end{eqnarray}
Consequently, we obtain
\begin{eqnarray}
&&N_1
\label{N1 firstbdd}
\\
\notag
&\ll& Q^{ \frac{n}{2}  }
\sum_{\substack{   1 \leq  j_1  \leq J  \\  0 \leq  j_2, \ldots, j_R \leq  j_1    }} \left( \prod_{r=1}^R b_{j_r}\right)
\sum_{\substack{ \k \in \mathcal{K}_{\j; 1}  } } \frac{w_{\j}^* \left( \frac{\k}{j_1} \right)}{  \sqrt{ |  \det H_{ F_{\j} } (\x_{\j;\k})  | }   } j_1^{ -\frac{n}{2} }  \min ( \|  j_1 F_{\j}^*(\k/j_1)  \|^{-1}   , Q )
\\
\notag
&+&
 Q^{\frac{n}{2}}
 \sum_{\substack{  1 \leq  j_1  \leq J  \\  0 \leq  j_2, \ldots, j_R \leq  j_1   }}
\prod_{r=1}^R \left(\frac{1}{J} +\min \left(\del, \frac{1}{j_r}\right)\right)
\sum_{\substack{ \k \in \mathcal{K}_{\j; 1}  } } j_1^{-\frac{n}{2} - 1}.
\end{eqnarray}
By a similar argument as in (\ref{cardK3-1}) we have
$$
\# \mathcal{K}_{\j; 1} \ll_L  j_1^n.
$$
Thus the second term in (\ref{N1 firstbdd}) can be bounded by
$$
\ll
Q^{\frac{n}{2}}  J^{\frac{n}{2}  - 1} (1+\log J)^{R}.
$$
Recall that by definition $\k \in \mathcal{K}_{\j;1}$ means $\k \in j_1 V_{\j}$.
Therefore, by simplifying the first term in (\ref{N1 firstbdd}) we obtain
\begin{eqnarray}
\label{N1lastbdd}
N_1& \ll& Q^{  \frac{n}{2} + 1}
\sum_{\substack{  1 \leq  j_1  \leq J  \\  0 \leq  j_2, \ldots, j_R \leq  j_1   }} \left( \prod_{r=1}^R b_{j_r}\right)
\sum_{\substack{  \substack{  \k \in j_1 V_{\j}  \\  \|  j_1 F_{\j}^*(\k/j_1)  \| < Q^{-1}   }   } } w_{\j}^* \left( \frac{\k}{j_1} \right) j_1^{- \frac{ n}{2} }
\\
\notag
&+&
Q^{  \frac{n}{2} }
\sum_{\substack{  1 \leq  j_1  \leq J  \\  0 \leq  j_2, \ldots, j_R \leq  j_1    }} \left( \prod_{r=1}^R b_{j_r}\right)
\sum_{\substack{  \k \in j_1 V_{\j}  \\   \|  j_1 F_{\j}^*(\k/j_1)  \| \geq Q^{-1}    } } w_{\j}^* \left( \frac{\k}{j_1} \right) j_1^{  -\frac{n}{2}  }  \|  j_1 F_{\j}^*(\k/j_1)  \|^{-1}
\\
\notag
&+& Q^{\frac{n}{2}}  J^{\frac{n}{2}  - 1} (1+\log J)^{R}.
\end{eqnarray}

We present the proof of the following result in Section \ref{auxinequ}.
\begin{prop}
\label{prop main}
Let $T >0$ and $J_2,\ldots, J_R \in [1, J]$. Then with the notations from this section, we have
$$
\sum_{\substack{   1 \leq  j_1  \leq J  \\  0 \leq  j_r \leq \min\{J_r, j_1\} \\ (2\leq r\leq R)    }}
\sum_{\substack{  \substack{  \k \in j_1 V_{\j}  \\  \|  j_1 F_{\j}^*(\k/j_1)  \| < T^{-1}   }   } } w_{\j}^* \left( \frac{\k}{j_1} \right)
\ll T^{-1} J^{n+1}\left( \prod_{r=2}^R J_r \right)+ \left( \prod_{r=2}^R J_r\right) J^n \mathcal{E}_n(J),
$$
where
$$
\mathcal{E}_n(J)
=
\mathcal{E}^{(\mathfrak{c}'_1; \mathfrak{c}'_2)}_n(J)
=
\begin{cases}
    \exp( \mathfrak{c}'_1 \sqrt{\log J}) & \mbox{if } n = 2 \\
    (\log J)^{\mathfrak{c}'_2} & \mbox{if } n \geq 3 \\
\end{cases}
$$
for some positive constants $\mathfrak{c}'_1$ and $\mathfrak{c}'_2.$ Here the constants $\mathfrak{c}'_1$ and $\mathfrak{c}'_2$ and the implicit constants depend only on $n$, $R$, $c_1$ and $c_2$ in (\ref{lowerbdd'}), $\rho$ in (\ref{defnrho}), $\rho'$ in (\ref{defrho'})
and upper bounds for (the absolute values of) finitely many derivatives of $w$ and $f_r$ $(2\leq r\leq R)$ on $\mathcal{D}_+$.
In particular, the implicit constant is independent of $T$ and $J_2, \ldots, J_R$.
\end{prop}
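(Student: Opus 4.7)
The plan is to fix the tuple $\j' := (j_2, \ldots, j_R)$ with $0 \leq j_r \leq J_r$ and bound the inner double sum
\[
S(\j') := \sum_{1 \leq j_1 \leq J}\ \sum_{\substack{\k \in j_1 V_{\j} \\ \|j_1 F_{\j}^*(\k/j_1)\| < T^{-1}}} w_{\j}^*\!\left(\frac{\k}{j_1}\right)
\]
by $T^{-1} J^{n+1} + J^n \mathcal{E}_n(J)$, uniformly in $\j'$. Summing trivially over $\j' \in \prod_{r=2}^R \{0, 1, \ldots, J_r\}$ will then produce the factor $\prod_{r=2}^R J_r$ and yield the claim of the proposition.

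To estimate $S(\j')$, I would apply the Huang scheme of \cite{JJH} a second time, now on the dual side. Concretely, approximate $\chi_{T^{-1}}(j_1 F_{\j}^*(\k/j_1))$ by Selberg magic functions $S^{\pm}_L$ in a new parameter $L$, expand in Fourier series in a new integer variable $l$, and apply $n$-dimensional Poisson summation in $\k$. This turns $S(\j')$ into a sum over $(j_1, l, \m) \in \mathbb{N} \times \mathbb{Z} \times \mathbb{Z}^n$ of oscillatory integrals whose phase is essentially $l F_{\j}^*(\z) - \m \cdot \z$ (scaled by $j_1$), and whose critical points satisfy $\z = \nabla F_{\j}(\m/l)$ thanks to $\nabla F_{\j}^* = (\nabla F_{\j})^{-1}$ from (\ref{Hssnreln}). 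By (\ref{Hssnreln}) combined with (\ref{lowerbdd'}) the dual Hessian $H_{F_{\j}^*} = H_{F_{\j}}^{-1}$ is uniformly non-degenerate, and Lemmas \ref{ct1}--\ref{ct2} applied to the family $F_{\j}^*$ supply a uniform separation constant $\rho'$ analogous to $\rho$ in (\ref{defnrho}); the dual lattice can then be split into three classes (analogues of $\mathcal{K}_{\j;1}, \mathcal{K}_{\j;2}, \mathcal{K}_{\j;3}$) and analyzed by Lemmas \ref{Lem1} and \ref{Lem2} exactly as in Sections \ref{sec5.1} and \ref{seck3}.

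The analogues of the sets $\mathcal{K}_{\j;2}$ and $\mathcal{K}_{\j;3}$ contribute error terms which, following the same trichotomy and partial-summation arguments as in Section \ref{easycase}, are absorbed into the $J^n \mathcal{E}_n(J)$ term. The analogue of $\mathcal{K}_{\j;1}$ is the key point: the critical value of the dual phase is $-l F_{\j}(\m/l)$ by the involutivity $F_{\j}^{**} = F_{\j}$ together with (\ref{dual}), so after the stationary-phase expansion and partial summation in $j_1$ this main term rearranges into a counting expression of the form $\mathcal{N}_{\widetilde{w}}(L, T^{-1})$ for the \emph{single} function $F_{\j} = f_1 + \sum_{r=2}^R (j_r/j_1) f_r$. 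Since $F_{\j}$ has non-degenerate Hessian uniformly by (\ref{lowerbdd'}), one may invoke the hypersurface ($R = 1$) case of Huang's Theorem 2 of \cite{JJH} as a black box; this is precisely where Huang's theorem enters, and it produces the $\mathcal{E}_n(J)$ factor. Combining the three contributions yields the claimed uniform bound on $S(\j')$.

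The main obstacle is enforcing uniformity in $\j$ throughout this second iteration. Since the ratios $t_r = j_r/j_1$ lie in the compact cube $[0,1]^{R-1}$, Lemmas \ref{ct1}--\ref{ct2} together with (\ref{lowerbdd'}) control the Hessian determinants, the diameters of $V_{\j}$ and $\mathcal{R}_{\j}$, the separation constant $\rho'$, and the $C^{\ell}$-norms of $F_{\j}$ and $F_{\j}^*$ on $\mathcal{D}_+$ by constants depending only on $n, R, c_1, c_2, \rho, \rho'$ and upper bounds for finitely many derivatives of the $f_r$'s and of $w$ on $\mathcal{D}_+$. Verifying that Huang's argument in \cite{JJH}, when applied to the family $\{F_{\j}\}_{\j}$, produces constants depending only on these quantities is the technical heart of the proof; once this uniformity is in place the outer summation over $\j'$ is trivial and the bound of the proposition follows.
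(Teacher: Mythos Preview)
Your overall strategy matches the paper's: majorize $\chi_{T^{-1}}$ by a trigonometric kernel (the paper uses the Fej\'er kernel rather than Selberg polynomials, since only an upper bound is needed here), Poisson-sum in the lattice variable, split the dual lattice into three regions by distance from $U$, handle the far and boundary regions by non-stationary and stationary phase respectively, and treat the main region by stationary phase followed by partial summation in $j_1$.

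There is, however, a genuine slip in your final step. After stationary phase on the dual side the phase at the critical point is
\[
e\bigl(-j_1\, l\, F_{\j}(\m/l)\bigr) \;=\; e\bigl(-l\,(j_1 f_1 + j_2 f_2 + \cdots + j_R f_R)(\m/l)\bigr),
\]
so when you carry out partial summation in $j_1$ with $\j'=(j_2,\ldots,j_R)$, $l$, $\m$ fixed, only the term $-l\, j_1 f_1(\m/l)$ actually oscillates in $j_1$; the contributions with $r\ge 2$ are constant phases and drop out. The amplitude still depends on $j_1$ through the ratios $j_r/j_1$, but only smoothly --- the paper packages this as the function $\Psi_{\k;d}$ and controls it in Lemma~\ref{bound''}. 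Consequently the counting problem that emerges is governed by $\|l\, f_1(\m/l)\|$ and concerns $f_1$ \emph{alone}, not $F_{\j}$. Your phrase ``apply Huang's theorem to the single function $F_{\j}$'' is not well-posed, since $F_{\j}$ depends on the already-summed variable $j_1$; correspondingly, what you call the ``main obstacle'' --- checking Huang's result uniformly across the family $\{F_{\j}\}$ --- evaporates. Huang's theorem is invoked once, for the fixed function $f_1$, and the only uniformity one must track is in the earlier oscillatory-integral estimates (Lemmas~\ref{Claim2} and~\ref{more lemma+} in the paper), which you correctly flag.
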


We recall that $b_j \ll \frac{1}{j}$ $(1 \leq j \leq J)$.
Let $\mathfrak{I}_{0} = \{ 0 \}$ and $\mathfrak{I}_s = [2^{s-1}, 2^{s}]$ $(s \in \NN)$.
Then it follows from Proposition \ref{prop main} and partial summation that
\begin{eqnarray}
\label{partial111}
&&\sum_{\substack{   1 \leq  j_1  \leq J  \\  0 \leq  j_2, \ldots, j_R \leq  j_1    }}  \left( \prod_{r=1}^R b_{j_r}\right) j_1^{-\frac{n}{2}}
\sum_{\substack{  \substack{  \k \in j_1 V_{\j}  \\  \|  j_1 F_{\j}^*(\k/j_1)  \| < T^{-1}   }   } } w_{\j}^* \left( \frac{\k}{j_1} \right)
\\
&\ll&
\notag
\sum_{0 \leq s_2, \ldots, s_R \leq \frac{\log J}{\log 2} + 1 } \sum_{\substack{   1 \leq  j_1  \leq J  \\  j_r \in \mathfrak{I}_{s_r} \cap [0, j_1] \\ (2 \leq r \leq R)     }}  \left( \prod_{r=2}^R 2^{- s_r}\right) j_1^{-\frac{n}{2}-1}
\sum_{\substack{  \substack{  \k \in j_1 V_{\j}  \\  \|  j_1 F_{\j}^*(\k/j_1)  \| < T^{-1}   }   } } w_{\j}^* \left( \frac{\k}{j_1} \right)
\\
&\ll&
\notag
J^{-\frac{n}{2}-1} (1+\log J)^{R} (T^{-1} J^{n+1} +  J^n \mathcal{E}_n(J)).
\end{eqnarray}
For the first sum in (\ref{N1lastbdd}), we set $T = Q$ in (\ref{partial111}).
For the second sum in (\ref{N1lastbdd}), we split the interval $[Q^{-1}, 1/2]$ into dyadic intervals.
Since the sum is a trivial sum if $Q^{-1}> 1/2$, we assume $Q^{-1} \leq 1/2$.
Then by (\ref{partial111}) we have
\begin{eqnarray}
\label{partialll!!}
&& \sum_{\substack{  1 \leq  j_1  \leq J  \\  0 \leq  j_2, \ldots, j_R \leq  j_1     }} \left( \prod_{r=1}^R b_{j_r}\right)   \sum_{ \substack{ \k \in  j_1 V_{\j}  \\  \|  j_1 F^*_{\j}(\k/j_1)  \| \geq Q^{-1} } }
w^* \left(  \frac{\k}{j_1}  \right) j_1^{-\frac{n}{2}}
\|  j_1 F^*_{\j} ( {\k}/{j_1}  )  \|^{-1}
\\
\notag
&\leq& \sum_{1 \leq i \leq \frac{\log Q}{\log 2} + 1 } Q 2^{1 - i}
\sum_{\substack{ 1 \leq  j_1  \leq J  \\  0 \leq  j_2, \ldots, j_R \leq  j_1    }} \left( \prod_{r=1}^R b_{j_r}\right)
\sum_{ \substack{   \k \in j_1 V  \\
\frac{2^{i-1}}{Q}  \leq \|  j_1 F^*_{\j}(\k/j_1)  \| \leq   \frac{2^{i}}{Q} } }
w^* \left(  \frac{\k}{j_1}  \right) j_1^{-\frac{n}{2}}
\\
\notag
&\ll& \sum_{1 \leq i \leq \frac{\log Q}{\log 2} + 1 } Q 2^{1 - i}
 J^{-\frac{n}{2} -1} (1 + \log J)^{R}   (2^i Q^{-1} J^{n+1} + J^n \mathcal{E}_n(J))
\\
&\ll&
\notag
\left( (\log Q) J^{\frac{n}{2} } +  Q   J^{ \frac{n}{2} -1}  \mathcal{E}_n(J) \right) (1 + \log J)^{R}.
\end{eqnarray}
Therefore, we obtain from (\ref{N1lastbdd}), (\ref{partial111}) and (\ref{partialll!!}) that  
\begin{eqnarray}
\label{N1lastbdd+}
N_1& \ll& Q^{  \frac{n}{2} + 1}   J^{-\frac{n}{2}-1}  (Q^{-1} J^{n+1} +  J^n \mathcal{E}_n(J)) (1 + \log J)^{R}
\\
\notag
&+&
Q^{\frac{n}{2} }   \left( (\log Q)    J^{\frac{n}{2} } + Q     J^{\frac{n}{2} -1}  \mathcal{E}_n(J) \right) (1 + \log J)^{R}
\\
\notag
&+&  Q^{\frac{n}{2}}  J^{\frac{n}{2}  - 1} (1 + \log J)^{R}
\\
&\ll&
\notag
\left( (\log Q)  Q^{  \frac{n}{2} }  J^{\frac{n}{2} }  +  Q^{  \frac{n}{2} + 1}   J^{\frac{n}{2} - 1 }  \mathcal{E}_n(J)
\right) (1 + \log J)^{R}.
\end{eqnarray}
Recall $n\geq 2$. Combining  (\ref{finalbddN2}), (\ref{finalbddN3}) and  (\ref{N1lastbdd+}) yields
\begin{eqnarray*}
&&N^{(1; (1, \ldots, 1) )}(Q, \delta)
\notag
\\
& \ll& (\log Q) (1 + \log J)^R +   J^{ \frac{ n }{ 2 } - 1} Q^{ \frac{ n }{ 2 } } (1+\log J)^R
\\
&+& \left( (\log Q)  Q^{  \frac{n}{2} }  J^{\frac{n}{2} }  +  Q^{  \frac{n}{2} + 1}   J^{\frac{n}{2} - 1 }  \mathcal{E}_n(J) \right) (1+\log J)^R
\\
&\ll&
\left( (\log Q) Q^{  \frac{n}{2} }  J^{\frac{n}{2} }  +   Q^{  \frac{n}{2} + 1}   J^{\frac{n}{2} - 1 }  \mathcal{E}_n(J) \right) (1+\log J)^R.
\end{eqnarray*}
With this bound in hand, it follows from (\ref{(3.5)}) and (\ref{N^1}) (recall the remark made after (\ref{N^1})) that
\begin{eqnarray}\label{eqn527}
&&\left| \mathcal{N}_w(Q, \delta) - (2\del)^R N_0\right|
\\
&\ll& \del^{R-1}\frac{1}{J} Q^{n+1}+\frac{1}{J^R}Q^{n+1}
\notag
+ (\log Q) (1 + \log J)^{R} Q^{  \frac{n}{2} }  J^{\frac{n}{2} }
\\
\notag
&+& (1 + \log J)^R  Q^{  \frac{n}{2} + 1}   J^{\frac{n}{2} - 1 }  \mathcal{E}_n(J).
\end{eqnarray}

We are still free to choose the parameter $J\geq 1$. For $R\geq 1$ we have
\begin{eqnarray}\label{four relations}
Q^{\frac{n}{2}}J^{\frac{n}{2}} < Q^{\frac{n}{2}+1}J^{\frac{n}{2}-1} &\Leftrightarrow& J< Q
\\ 
\frac{1}{J^R} Q^{n+1} \leq Q^{\frac{n}{2}+1}J^{\frac{n}{2}-1} &\Leftrightarrow & Q^{\frac{n}{n+2(R-1)}} \leq  J
\notag 
\\
\notag
\del^{R-1} J^{-1} Q^{n+1} < \frac{1}{J^R} Q^{n+1} &\Leftrightarrow& J < \del^{-1}\\
\notag
 \del^{R-1}J^{-1}Q^{n+1} \leq Q^{\frac{n}{2}+1}J^{\frac{n}{2}-1} &\Leftrightarrow & Q\del^{\frac{2(R-1)}{n}} \leq  J.
\end{eqnarray}
We now make a case distinction.
We assume $Q \geq 2$ as Theorem \ref{main upper bound thm} when $Q = 1$ follows immediately from (\ref{(3.5)}).
If $\del^{-1} > Q^{\frac{n}{n+2(R-1)}}$ then we set $J=Q^{\frac{n}{n+2(R-1)}}$, and  we obtain from
the term $(1 + \log J)^R Q^{  \frac{n}{2} + 1}   J^{\frac{n}{2} - 1 }  \mathcal{E}_n(J)$
(see the first three equivalences in (\ref{four relations}))
that
\begin{eqnarray}
\left| \mathcal{N}_w(Q, \delta) - (2\del)^R N_0\right| &\ll& Q^{n + 1-\frac{Rn}{n+2(R-1)}} (\log Q)^{R} \mathcal{E}_n(Q)
\notag
\\
\notag
&=& Q^{n -\frac{(n-2) (R-1) }{n + 2(R-1)}} (\log Q)^{R} \mathcal{E}_n(Q).
\end{eqnarray}
If $\del^{-1} \leq Q^{\frac{n}{n+2(R-1)}}$ then we set $J=Q\del^{\frac{2(R-1)}{n}}$, and  we obtain from
the term $(1 + \log J)^R Q^{  \frac{n}{2} + 1}   J^{\frac{n}{2} - 1 }  \mathcal{E}_n(J)$
(see the first and the last two equivalences in (\ref{four relations}))
that
\begin{eqnarray}
\left| \mathcal{N}_w(Q, \delta) - (2\del)^R N_0\right| &\ll& \del^{\frac{(R-1)(n-2)}{n}} (\log Q)^{R} Q^n  \mathcal{E}_n(Q).
\notag
\end{eqnarray}
Finally, we note that
$$
\mathcal{E}_n(Q) = \mathcal{E}^{(\mathfrak{c}'_1; \mathfrak{c}'_2)}_n(Q) \ll \mathcal{E}^{(\mathfrak{c}'_1 + c_0 R; \mathfrak{c}'_2 + R)}_n(Q)
$$
for some absolute constant $c_0 > 0$.
Therefore, we let $\mathfrak{c}_1 = \mathfrak{c}'_1 + c_0 R$ and $\mathfrak{c}_2 = \mathfrak{c}'_2 + R$, and
this completes the proof of Theorem \ref{main upper bound thm}.

\section{Proof of Proposition \ref{prop main}}\label{auxinequ}
\label{input1}

Recall the definitions $F_{\j} = f_1 + \frac{j_2}{j_1} f_2 + \cdots + \frac{j_R}{j_1} f_R$,
$w_{\j}^* = w \circ (\nabla F_{\j})^{-1}$, $U = \supp w$ and $V_{\j} = \nabla F_{\j}(U)$.
It is clear that the result when $0 < T < 2$ follows from the case $T = 2$.
Let $T \geq 2$ and $D = \lfloor T / 2  \rfloor$. We define
\begin{eqnarray}
\label{eqn 6.1}
\mathcal{M} ( J, T^{-1}) &=& \sum_{ \substack{ 1 \leq  j_1  \leq J  \\  0 \leq  j_r \leq \min\{J_r, j_1\} \\ (2\leq r\leq R) }}
\sum_{\substack { \a \in  j_1 V_{\j}  \\  \|  j_1 F_{\j}^*(\a/j_1)  \| \leq T^{-1} } }
w_{\j}^* \left(  \frac{\a}{j_1}  \right)
\\
\notag
&=& \sum_{ \substack{ 1 \leq  j_1  \leq J  \\ 0 \leq  j_r \leq \min\{J_r, j_1\} \\ (2\leq r\leq R) } }
\sum_{\substack { \a \in  \ZZ^n  \\  \|  j_1 F_{\j}^*(\a/j_1)  \| \leq T^{-1} } }
w_{\j}^* \left(  \frac{\a}{j_1}  \right).
\end{eqnarray}
Next we consider the Fej\'{e}r kernel $\mathcal{F}_{D}$. By the trigonometric identity
$1 - 2 \sin^2 (x) = \cos(2 x)$ $(x \in \mathbb{R})$ it follows that
\begin{eqnarray}
\label{FeKe1}
\mathcal{F}_{D}(\theta) =
D^{-2} \left|  \sum_{d=1}^{D} e(d \theta) \right|^2
= \left(  \frac{\sin (\pi D \theta) }{ D \sin (\pi \theta) }  \right)^2
= \sum_{d = -D}^{D} \frac{D - |d|}{D^2} e(d \theta).
\end{eqnarray}
Since the sine function is concave on $[0, \pi/2]$, we have
$$\sin (x) \geq 2\pi^{-1}x$$
for $x \in [0, \pi/2]$.
Thus it can be verified easily that
$$
\left(  \frac{\sin (\pi D \theta) }{ D \sin (\pi \theta) }  \right)^2
\geq \left(  \frac{2 \pi^{-1} \pi D \| \theta \|}{ D \pi \|  \theta \| }  \right)^2 \geq \frac{4}{\pi^2}
$$
when $0 < \| \theta \| \leq  T^{-1}$; therefore, recalling the definition of $\chi_{T^{-1}}$ in (\ref{defchi}) it follows that
\begin{eqnarray}
\label{C-Fbdd}
\chi_{T^{-1}}(\theta) \leq \frac{\pi^2}{4} \cF_{D}(\theta) =  \frac{\pi^2}{4}\sum_{d = -D}^{D} \frac{D - |d|}{D^2} e(d \theta) \quad (\theta \in \mathbb{R}).
\end{eqnarray}
By inserting (\ref{C-Fbdd}) into (\ref{eqn 6.1}), we obtain
\begin{eqnarray}
\label{sum 6.2}
\mathcal{M} ( J, T^{-1}) \leq \frac{\pi^2}{4}
\sum_{ \substack{ \a \in \mathbb{Z}^n  \\  1 \leq  j_1  \leq J  \\ 0 \leq  j_r \leq \min\{J_r, j_1\} \\ (2\leq r\leq R)   } }
\sum_{ d = - D}^D \frac{D - |d|}{D^2}
w_{\j}^* \left(  \frac{\a}{j_1}  \right) e\left( d j_1 F^*_{\j} \left(  \frac{\a}{j_1}  \right)  \right).
\end{eqnarray}
Since $\supp w = U$ it follows that
$$
\supp w^*_{\j} = V_{\j} \subseteq [-L, L]^n,
$$
where $L$ is defined in (\ref{defL}). Recall $J_r\geq 1$ $(2\leq r\leq R)$. 
Therefore, the contribution from the terms with $d=0$ in (\ref{sum 6.2}) is
\begin{eqnarray}
\label{MAIN++}
\frac{\pi^2}{4 D}\sum_{ \substack{ \a \in \mathbb{Z}^n  \\  1 \leq  j_1  \leq J  \\  0 \leq  j_r \leq \min\{J_r, j_1\} \\ (2\leq r\leq R)  } }
w_{\j}^* \left(  \frac{\a}{j_1}  \right)
\ll \left(\prod_{r=2}^R J_r\right)
\frac{1}{D} \sum_{1 \leq j_1 \leq J} j_1^{n} \ll \left(\prod_{r=2}^R J_r\right) \frac{  J^{n+1} }{D},
\end{eqnarray}
where the implicit constants depend only on $n$ and $L$. 

By the $n$-dimensional Poisson summation formula we obtain
\begin{eqnarray}
&&\sum_{  \a \in \mathbb{Z}^n  } w_{\j}^* \left(  \frac{\a}{j_1}  \right)   e\left( d j_1 F^*_{\j} \left(  \frac{\a}{j_1}  \right)  \right)
\\
&=&
\sum_{\k \in \mathbb{Z}^n} \int_{\mathbb{R}^n}   w_{\j}^* \left(  \frac{\z}{j_1}  \right)   e\left( d j_1 F^*_{\j} \left(  \frac{\z}{j_1}  \right) - \k \cdot \z  \right)       d\z
\notag
\\
\notag
&=&
j_1^{ n } \sum_{\k \in \mathbb{Z}^n} I_0(d; \j; \k),
\end{eqnarray}
where
$$
I_0(d; \j; \k) = \int_{\mathbb{R}^n}   w_{\j}^* (  \x ) e ( j_1 d  F^*_{\j} ( \x ) - j_1 \k \cdot \x   )    d\x.
$$
Therefore, in order to establish Proposition \ref{prop main}, it suffices to obtain a bound for
\begin{eqnarray}
&& \left| \sum_{ \substack{ 1 \leq  j_1  \leq J  \\  0 \leq  j_r \leq \min\{J_r, j_1\} \\ (2\leq r\leq R)   }   }
\sum_{1 \leq |d| \leq D} \frac{D - |d|}{D^2}
j_1^{n} \sum_{\k \in \mathbb{Z}^n} I_0(d; \j; \k) \right|
\\
\notag
&\leq&
2 \left|  \sum_{ \substack{  1 \leq  j_1  \leq J  \\  0 \leq  j_r \leq \min\{J_r, j_1\} \\ (2\leq r\leq R)    } }
\sum_{d=1}^D \frac{D - d}{D^2}
j_1^{n} \sum_{\k \in \mathbb{Z}^n} I_0(d; \j; \k) \right|,
\end{eqnarray}
where the inequality is obtained via complex conjugation.

Recall $\nabla F_{\j}$ is a diffeomorphism on $\overline{\mathcal{D}}$, $\nabla F_{\j} (\mathcal{D}) = \mathcal{R}_{\j}$, $\nabla F_{\j} (U_+) = V_{\j+}$
and $\nabla F_{\j}^* = (\nabla F_{\j})^{-1}$.
Let
\begin{equation}
\label{defrho'}
\rho' = \frac{1}{2} \textnormal{dist}(\partial \mathcal{D}, \partial U).
\end{equation}
As in Section \ref{easycase} we split the set of $\k \in \mathbb{Z}^n$ into three disjoint subsets.
We let
$$
\mathcal{K}_1 = \{ \k \in \ZZ^n : \frac{\k}{d} \in U \},
$$
$$
\mathcal{K}_2 = \{ \k \in \ZZ^n : \textnormal{dist} \left(\frac{\k}{d},  U \right) \geq \rho' \}
$$
and
$$
\mathcal{K}_3 = \ZZ^n \backslash  (\mathcal{K}_1 \cup \mathcal{K}_2).
$$
For each $1 \leq i \leq 3$, we define
$$
M_i =
 \sum_{d=1}^D \frac{D - d}{D^2} \left|
\sum_{ \substack{ 1 \leq  j_1  \leq J  \\  0 \leq  j_r \leq \min\{J_r, j_1\} \\  (2\leq r\leq R)   } }
j_1^{ n }    \sum_{\k \in \mathcal{K}_{i} } I_0(d; \j; \k)  \right|
$$
so that
\begin{eqnarray}
\label{MM1M2M3}
\mathcal{M} ( J, T^{-1}) \ll \left(\prod_{r=2}^R J_r\right) \frac{  J^{n+1} }{D}  +  M_1 + M_2 + M_3.
\end{eqnarray}
We now bound each $M_i$ separately.

\subsection{Case $\k \in \mathcal{K}_2$}
In this case we let
$$
\phi_1(\x) = \frac{ d  F^*_{\j} ( \x)  - \k\cdot \x}{\textnormal{dist} (\k, d U )}
$$
and
$$
\lambda_1 = j_1 \cdot \textnormal{dist} (\k, d U ).
$$
Then it follows that
$$
|\nabla \phi_1 (\x)|  = \frac{ |  d  \nabla F^*_{\j} ( \x)  - \k  | }{\textnormal{dist} (\k, d U )} \geq 1 \quad (\x \in V_{\j}),
$$
and furthermore by (\ref{Vj+}) that
$$
|\nabla \phi_1 (\x)| \geq \frac12 \quad (\x \in V_{\j+}).
$$

Next we need upper bounds for the derivatives of $\phi_1$.
\begin{lem}\label{Claim2}
Given $i_1, \ldots, i_n \in \ZZ_{\geq 0}$ with $\sum_{m=1}^n i_m\leq \ell$ we have
$$
\left| \frac{\partial^{i_1 + \cdots + i_n}  \phi_1}{ \partial x_1^{i_1} \cdots \partial x_n^{i_n} }  (\x)  \right| \ll 1 \quad (\x \in V_{\j+}),
$$
where the implicit constant is independent of $d, \j$ and $\k$ (In fact, it
depends only on $(i_1,\ldots, i_n)$,
upper bounds for (the absolute values of) finitely many derivatives of $f_r$ $(1\leq r\leq R)$ on $U_+$,
and the constants $c_1$ and $c_2$ in (\ref{lowerbdd'}).) Moreover, the same statement holds
with $\phi_1$ and $\ell$ replaced by $w_{\j}^*$ and $(\ell -1)$ respectively.
\end{lem}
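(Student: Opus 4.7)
The plan is to mirror the proof of Lemma \ref{Claim1} with $F_{\j}$ replaced by its Legendre transform $F_{\j}^*$, the set $U$ replaced by $V_{\j}$, and $j_1$ replaced by $d$. The key observation is that for any multi-index of order at least $2$, the linear term $\k\cdot\x$ in the numerator of $\phi_1$ differentiates away, so it suffices to control the scalar factor $d/\dist(\k, dU)$ together with higher partials of $F_{\j}^*$ on $V_{\j+}$.

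First I would establish the boundedness of $d/\dist(\k, dU)$ uniformly in $(d,\j,\k)$ by the same case distinction as in Lemma \ref{Claim1}. Choosing $C > 2 \max_{\y\in\overline{U}}|\y|$, the argument splits into two regimes: when $|\k|\geq Cd$, rescaling shows $\dist(\k/|\k|, dU/|\k|)\geq 1/2$ and hence $\dist(\k,dU)\geq |\k|/2$, so $d/\dist(\k,dU)\leq 2/C$; when $|\k|<Cd$, the defining condition $\dist(\k/d, U)\geq \rho'$ of $\mathcal{K}_2$ yields $\dist(\k,dU)\geq d\rho'$. For $\phi_1$ itself and its first partials one rewrites $\phi_1$ as
$$\phi_1(\x) = \frac{(d/|\k|)\,F_{\j}^*(\x) - (\k/|\k|)\cdot\x}{\dist(\k/|\k|, dU/|\k|)}
\quad \mbox{or} \quad
\phi_1(\x) = \frac{F_{\j}^*(\x) - (\k/d)\cdot\x}{\dist(\k,dU)/d}$$
in the two regimes respectively, whose numerators and denominators are uniformly bounded on $V_{\j+}$.

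Second I would show that the partials of $F_{\j}^*$ of orders up to $\ell$, and those of $(\nabla F_{\j})^{-1}$ (hence of $w_{\j}^*$) of orders up to $\ell-1$, are bounded on $V_{\j+}$ independently of $\j$. Writing $\t=(j_2/j_1,\ldots,j_R/j_1)\in [0,1]^{R-1}$ we have $F_{\j} = G_{\t}$, and iterated differentiation of $\nabla F_{\j}^* = (\nabla F_{\j})^{-1}$, together with (\ref{Hssnreln}), expresses every such partial as a polynomial in the partials of $F_{\j}$ evaluated at a point of $U_+$, divided by a power of $\det H_{F_{\j}}$. The lower bound in (\ref{lowerbdd'}), the compactness of $[0,1]^{R-1}$, and the $C^\ell$-regularity of $f_1,\ldots,f_R$ on $U_+$ then yield the claimed uniform bounds; for $w_{\j}^*$ the same analysis, combined with the Fa\`a di Bruno formula applied to $w\circ(\nabla F_{\j})^{-1}$, does the job, losing one order of differentiability because $(\nabla F_{\j})^{-1}$ is only $C^{\ell-1}$.

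The principal obstacle is the uniformity in $\j$: both the domain $V_{\j+}$ and the function $F_{\j}^*$ vary with $\j$, so pointwise smoothness is insufficient. This is precisely where the compactness of the parameter set $[0,1]^{R-1}$, upgraded through Lemma \ref{ct1} and the hypothesis (\ref{lowerbdd'}), becomes essential, converting pointwise regularity into the required $\j$-uniform estimates.
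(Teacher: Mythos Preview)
Your proposal is correct and follows essentially the same route as the paper's proof: first reduce the bounds for $\phi_1$ to uniform bounds for the partials of $F_{\j}^*$ (and of $(\nabla F_{\j})^{-1}$) on $V_{\j+}$ by the same case split as in Lemma~\ref{Claim1}, then obtain those uniform bounds by repeatedly differentiating the identity $\nabla F_{\j}^* = (\nabla F_{\j})^{-1}$ and using the lower bound in (\ref{lowerbdd'}) to control the resulting powers of $1/\det H_{F_{\j}}$. The paper's write-up is slightly terser (it does not name Fa\`a di Bruno explicitly and simply observes that the $w_{\j}^*$ case follows from $w_{\j}^* = w\circ\nabla F_{\j}^*$), but the substance is the same.
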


\begin{proof}
We prove the statement for $F^*_{\j}$; the desired result for $\phi_1$ can then be proved in a similar manner as in the proof of Lemma \ref{Claim1}.
The statement for $w^*_{\j}$ also follows on recalling $w^*_{\j} = w \circ \nabla F_{\j}^*.$
We omit the latter details.

Since
$$
|\x \cdot \y| +|F_{\j}(\y)| \ll 1 \ (\x \in V_{\j+}, \y \in U_+),
$$
it follows from (\ref{dual}) that $|F^*_{\j}(\x)| \ll 1$ $(\x \in V_{\j+})$.

Next we recall that $\nabla F_{\j}^* = (\nabla F_{\j})^{-1}$.
Since $(\nabla F_{\j})^{-1} (V_{\j+}) = U_+$ we see that
$|\nabla F_{\j}^*(\x)| \ll 1$ $(\x \in V_{\j+})$.
By the chain rule we know that for $\x = \nabla F_{\j} (\y)$ with $\y \in U_+$, we have
\begin{equation}
\label{chain}
\textnormal{Jac}_{\nabla F^*_{\j}} (\x ) = \textnormal{Jac}_{(\nabla F_{\j})^{-1}} (\x ) =
(\textnormal{Jac}_{\nabla F_{\j}} (\y)  )^{-1};
\end{equation}
therefore, every second partial derivative of $F^*_{\j}$ is of the shape
\begin{equation}
\label{jaco1}
\frac{P}{ \det ( \textnormal{Jac}_{\nabla F_{\j}} (\y) ) },
\end{equation}
where $P$ is a degree $(n-1)$ real polynomial expression  (each coefficient is either $\pm 1$ or $0$)
in terms of the entries of  $\textnormal{Jac}_{\nabla F_{\j}} (\y)$.
It is clear that given any $i_1, \ldots, i_n \in \ZZ_{\geq 0}$ with $\sum_{m=1}^n i_m\leq \ell$ we have
$$
\left| \frac{\partial^{i_1 + \cdots + i_n}  F_{\j}}{ \partial y_1^{i_1} \cdots \partial y_n^{i_n} }  (\y)  \right| \ll 1 \quad (\y \in U_+).
$$
We also have $\textnormal{Jac}_{\nabla F_{\j}} = H_{F_{\j}}$. Therefore, from (\ref{lowerbdd'}) and (\ref{jaco1}) it follows that the absolute values of second partial derivatives of $F^*_{\j}$
are bounded by $\ll 1$.

Let $k \in \mathbb{N}$.
For higher partial derivatives, we note that by (\ref{chain}) and (\ref{jaco1})
any $k$-th partial derivative with respect to the $\x$-variables of an entry in $\textnormal{Jac}_{\nabla F^*_{\j}} (\x)$
is a degree $(n+kn+k)$ real  polynomial expression  
(the coefficients are independent of $\j$, but may depend on $k$)
in terms of:

1) (at most $(k+1)$-th) powers of $1/ \det ( \textnormal{Jac}_{\nabla F_{\j}} (\y) )$;


2) entries in $\textnormal{Jac}_{\nabla F_{\j}} (\y)$;

3) (at most $k$-th) partial derivatives with respect to the $\y$-variables of entries in $\textnormal{Jac}_{\nabla F_{\j}} (\y)$;

4) (at most $k$-th) partial derivatives with respect to the $\x$-variables of entries in $\nabla F^*_{\j} (\x) = (\nabla F_{\j})^{-1} (\x)$.
\newline
Therefore, the desired result follows from (\ref{lowerbdd'})  and induction.
\end{proof}

Therefore, it follows from Lemma \ref{Lem1} (with $\phi_1$ and $\lambda_1$) that
$$
I_0(d; \j; \k) \ll   \lambda_1^{- \ell + 1} =
(j_1 \cdot \textnormal{dist} (\k, d U ) )^{-\ell + 1},
$$
where the implicit constant is independent of $\j$ and $\k$.
As a result, since $\ell - 1 - n \geq 1$ it follows by a similar argument as in (\ref{(4.4)}) that
\begin{eqnarray}
\sum_{\k \in \mathcal{K}_2} I_0(d; \j; \k)
\ll  j_1^{-\ell + 1} \sum_{\k \in \mathcal{K}_2}  \textnormal{dist} (\k, d U )^{-\ell + 1}
\notag
\ll  j_1^{-\ell + 1},
\end{eqnarray}
where the implicit constant is independent of $d$.
Thus we obtain
\begin{eqnarray}
\label{M2bound}
M_2 &\leq& \sum_{ \substack{ 1 \leq  j_1  \leq J  \\ 0 \leq  j_r \leq \min\{J_r, j_1\} \\ (2\leq r\leq R) } }
\sum_{d=1}^D \frac{D - d}{D^2}
j_1^{ n }
\left| \sum_{\k \in \mathcal{K}_2}  I_0(d; \j; \k)  \right|
\\
\notag
&\ll&
\sum_{ \substack{ 1 \leq  j_1  \leq J  \\  0 \leq  j_r \leq \min\{J_r, j_1\} \\  (2\leq r\leq R)  } } j_1^{n  - \ell  + 1  }
\\
\notag
&\ll& \left(\prod_{r=2}^R J_r\right) \log J.
\end{eqnarray}

\subsection{Case $\k \in \mathcal{K}_3$}
\label{sec6.2}
Let $\lambda = j_1 d$ and
$$
\phi(\x) = F^*_{\j} (\x)  - \frac{\k}{d} \cdot \x.
$$
By our hypotheses, for each fixed $d$ we have that each $\k \in d \mathcal{D}$
determines a unique critical point $x_{d; \j; \k}$ of $\phi$, i.e.
$$
\mathbf{0} = \nabla \phi (\x_{d; \j; \k}) =
\nabla F^*_{\j} (\x_{d; \j;\k})  - \frac{\k}{d}.
$$
Then by (\ref{dual}) we have
\begin{eqnarray}
\label{(4.5)}
\phi(\x_{d; \j; \k}) = F^*_{\j}(\x_{d;\j; \k}) - \frac{\k}{d} \cdot \x_{d; \j; \k} = - F_{\j}\left( \frac{\k}{d} \right),
\end{eqnarray}
which we will make use of in the next section. It also follows from the definition of $\mathcal{R}_{\j}$ that
$$
\x_{d; \j; \k} = ( \nabla F^*_{\j} )^{-1} (\k / d) = \nabla F_{\j} (\k / d) \in \mathcal{R}_{\j}.
$$
\begin{lem}
\label{more lemma+}
We have
$$
\frac{| \x - \x_{d; \j;\k} |}{| \nabla \phi (\x) |} \ll 1 \quad (\x \in \nabla F_{\j} (\mathcal{D}_+), \x\neq \x_{d; \j;\k}),
$$
where the implicit constant is independent of $d$, $\j$ and $\k$.
\end{lem}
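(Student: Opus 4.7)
\medskip

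\noindent\textbf{Proof plan.} The key observation is that $\nabla\phi$ can be written as a difference of values of $\nabla F_{\j}^*$ at the two points appearing in the numerator. Indeed, since $\x_{d;\j;\k}=\nabla F_{\j}(\k/d)$ and $\nabla F_{\j}^*=(\nabla F_{\j})^{-1}$ on $\mathcal{R}_{\j}$, we have
\[
\nabla F_{\j}^*(\x_{d;\j;\k}) \;=\; (\nabla F_{\j})^{-1}\bigl(\nabla F_{\j}(\k/d)\bigr) \;=\; \frac{\k}{d},
\]
so that
\[
\nabla\phi(\x) \;=\; \nabla F_{\j}^*(\x) - \frac{\k}{d} \;=\; \nabla F_{\j}^*(\x) - \nabla F_{\j}^*(\x_{d;\j;\k}).
\]
Consequently, the lemma reduces to establishing
\[
|\x - \y| \;\ll\; |\nabla F_{\j}^*(\x) - \nabla F_{\j}^*(\y)| \qquad \bigl(\x,\y \in \overline{V_{\j+}},\ \x\neq\y\bigr),
\]
uniformly in $\j$, and then specialising $\y=\x_{d;\j;\k}\in\mathcal{R}_{\j}\subseteq V_{\j+}$.

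\medskip

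\noindent My plan to prove this reduced inequality is to change variables back to the $\x$--side via the diffeomorphism $\nabla F_{\j}$. Concretely, write $\x=\nabla F_{\j}(\x')$ and $\y=\nabla F_{\j}(\y')$ with $\x',\y'\in\overline{\mathcal{D}_+}$; this is a bijection because, by Lemma \ref{ct1} applied to $G(\x,\t)=f_1(\x)+\sum_{r\ge 2}t_rf_r(\x)$ and the compact set $[0,1]^{R-1}$, the map $\nabla F_{\j}$ is a $C^{\ell-1}$--diffeomorphism on a neighbourhood of $\overline{\mathcal{D}_+}\subseteq B_{3\tau/2}(\x_0)\subset \overline{B_{2\tau}(\x_0)}$ for every admissible $\j$. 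Since $\nabla F_{\j}^*\circ \nabla F_{\j} = \mathrm{id}$, the desired estimate becomes
\[
|\nabla F_{\j}(\x') - \nabla F_{\j}(\y')| \;\ll\; |\x' - \y'| \qquad \bigl(\x',\y'\in\overline{\mathcal{D}_+}\bigr),
\]
uniformly in $\j$. But this is exactly the upper bound in (\ref{eqnlemmamore}), which was already proved during Lemma \ref{lem more lemma} via the Taylor expansion (\ref{ineq11}) together with the uniform bound on the entries of $H_{F_{\j}}$ (these entries are bounded independently of $\j$ because $f_r\in C^{\ell}(\mathcal{D}_+)$ and the coefficients $j_r/j_1$ lie in $[0,1]$).

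\medskip

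\noindent The uniformity of the implied constant in $d$, $\j$ and $\k$ is automatic: after the substitution, $d$ and $\k$ have disappeared entirely, and the dependence on $\j$ was already shown in Lemma \ref{lem more lemma} to reduce to the constants $c_1,c_2$ of (\ref{lowerbdd'}) and the sup-norms of finitely many derivatives of the $f_r$ on $\mathcal{D}_+$. I do not anticipate a serious obstacle here; the only mildly delicate point is to verify the inclusion $\x_{d;\j;\k}\in \overline{V_{\j+}}$ so that the reduced inequality may be applied at the pair $(\x,\x_{d;\j;\k})$, but this follows directly from $\x_{d;\j;\k}\in \mathcal{R}_{\j}=\nabla F_{\j}(\mathcal{D})\subseteq \nabla F_{\j}(\mathcal{D}_+)=V_{\j+}$.
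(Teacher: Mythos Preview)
Your approach is correct and essentially identical to the paper's: both reduce the estimate to the Lipschitz bound $|\nabla F_{\j}(\x')-\nabla F_{\j}(\y')|\ll |\x'-\y'|$ on $\overline{\mathcal{D}_+}$ established in Lemma~\ref{lem more lemma}. Two small slips to fix: (i) in the paper $V_{\j+}=\nabla F_{\j}(U_+)$, not $\nabla F_{\j}(\mathcal{D}_+)$, so your inclusions $\mathcal{R}_{\j}\subseteq V_{\j+}$ and $\nabla F_{\j}(\mathcal{D}_+)=V_{\j+}$ are false as written (just replace $V_{\j+}$ by $\nabla F_{\j}(\mathcal{D}_+)$ throughout); (ii) the inequality you quote is the \emph{lower} bound in (\ref{eqnlemmamore}), not the upper bound.
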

\begin{proof}
Since
$$
\frac{| \x - \x_{d; \j;\k} |}{| \nabla \phi (\x) |}
=
\frac{| \x - ( \nabla F_{\j}^* )^{-1} (\k / d) |}{| \nabla F_{\j}^* (\x) - \frac{\k}{d} |}
$$
and $\k/d \in \mathcal{D}$,
it suffices to prove
$$
\frac{| \x - \y |}{| \nabla F_{\j}^* (\x) - \nabla F_{\j}^* (\y) |} \ll 1 \quad (\x, \y \in \overline{\nabla F_{\j} (\mathcal{D}_+)}, \x\neq \y);
$$
this inequality is equivalent to
$$
\frac{| \nabla F_{\j} (\x') - \nabla F_{\j} (\y') |}{| \x' - \y' |} \ll 1 \quad (\x', \y' \in \overline{\mathcal{D}_+}, \x'\neq \y'),
$$
which is equivalent to the lower bound in (\ref{eqnlemmamore}).
\end{proof}
Since $(\nabla F_{\j})^{-1} (\x_{d;\j; \k}) = \k / d \not \in U$ we have $\x_{d; \j; \k} \not \in \nabla F_{\j} (U) = \supp w^*_{\j}$, i.e. $w^*_{\j}(\x_{d;\j; \k}) = 0$. The cardinality of $\mathcal{K}_{3}$ can be bounded as follows
\begin{eqnarray}
\label{cardK3}
\# \mathcal{K}_{3}  \leq
\sum_{  \substack{  \k \in \mathbb{Z}^n \\  \textnormal{dist} (\k/d, U ) < \rho' }} 1
 \ll_{U, \rho'} d^n.
\end{eqnarray}
Given $i_1, \ldots, i_n \in \ZZ_{\geq 0}$ with $\sum_{m=1}^n i_m\leq \ell$ we have
\begin{eqnarray}
\label{derivboundmainpropsec}
\left| \frac{\partial^{i_1} \cdots \partial^{i_n}  \phi}{ \partial x_1^{i_1} \cdots \partial x_n^{i_n} }  (\x) \right| \ll 1 \ \ (\x \in V_{\j+}),
\end{eqnarray}
where the implicit constant is independent of the choice of $d, \j$ and $\k$ in consideration;
this can be deduced from what we have shown in the proof of Lemma \ref{Claim2}.
We also have $H_{\phi} = H_{F_{\j}^*}$.
Therefore, it follows from Lemma \ref{Lem2}, (\ref{Hssnreln}) and (\ref{lowerbdd'}) that
\begin{eqnarray}
\sum_{\k \in \mathcal{K}_{3} }
I_0(d; \j; \k) \ll  d^n \lambda^{-\frac{n}{2} -1 } =   j_1^{-\frac{n}{2} -1} d^{ \frac{n}{2} - 1 }.
\end{eqnarray}
Consequently, we obtain
\begin{eqnarray}
\label{M3bound}
M_3 &=& \sum_{d=1}^D \frac{D - d}{D^2}
 \left| \sum_{ \substack{ 1 \leq  j_1  \leq J  \\  0 \leq  j_r \leq \min\{J_r, j_1\} \\(2\leq r\leq R)    } } j_1^{n} \sum_{\k \in \mathcal{K}_3}
  I_0(d; \j; \k) \right|
\\
&\ll&
\frac{1}{D} \sum_{ \substack{  1 \leq  j_1  \leq J  \\  0 \leq  j_r \leq \min\{J_r, j_1\} \\  (2\leq r\leq R)   } } j_1^{\frac{n}{2} - 1}
\sum_{d=1}^D d^{ \frac{n}{2} - 1 }
\notag
\\
\notag
&\ll& \left(\prod_{r=2}^R J_r\right) J^{\frac{n}{2} }   D^{ \frac{n}{2} - 1 }.
\end{eqnarray}

\subsection{Case $\k \in \mathcal{K}_1$}
Let $\lambda$ and $\phi$ be as in Section \ref{sec6.2}.
In particular, we have (\ref{(4.5)}) and Lemma \ref{more lemma+}. It can be verified that $\phi$ and its partial derivatives satisfy (\ref{derivboundmainpropsec}).
By (\ref{Hssnreln}) we have
$$
H_{ F_{\j}^*} (\x_{d ; \j; \k})  = H_{ F_{\j} } (\k/d)^{-1},
$$
and it follows that
\begin{equation}
\label{hessianeqn1}
\frac{1}{ \sqrt{|  \det  H_{ F_{\j}^* } (\x_{d ; \j; \k}) |} } =  \sqrt{ | \det  H_{ F_{\j} } (\k/d) | }.
\end{equation}
Since $\det (H_{f_1 + t_2 f_2 + \cdots + t_R f_R} (\y) ) \not = 0$ for all $\t \in [0,1]^{R-1}$ and $\y \in U$, and the eigenvalues of a matrix are continuous with respect to the coefficients of the matrix, it follows that the signature of $H_{f_1 + t_2 f_2 + \cdots + t_R f_R}(\y)$ does not change for $\t \in [0,1]^{R-1}$ and $\y \in U$. Thus we let $\sigma_0$ be the signaure of $H_{ \phi } (\x_{d ; \j; \k}) = H_{ F_{\j}^*} (\x_{d ; \j; \k})$,
which is constant for all $d, \j$ and $\k$ in consideration.
Therefore, it follows from Lemma \ref{Lem2}, (\ref{lowerbdd'}) and  (\ref{(4.5)}) that
\begin{eqnarray}
&&I_0(d; \j; \k)
\\
\notag
&=& \frac{w^*_{\j}(\x_{d; \j; \k})}{  \sqrt{ |  \det H_{ F^*_{\j} } (\x_{d;\j;\k})  | } } (j_1 d)^{-\frac{n}{2}}
e \left(  - j_1 d F_{\j} \left(  \frac{\k}{d}  \right)   +   \frac{\sigma_{\j}}{8} \right)
+  O\left( (j_1 d)^{ -\frac{n}{2} -1 } \right)
\\
\notag
&=& w \left( \frac{\k}{d} \right)  \sqrt{ | \det  H_{ F_{\j} } (\k/d) | }   (j_1 d)^{-\frac{n}{2}}
e \left(  - d ( j_1 f_1 + \cdots + j_R  f_R ) \left(  \frac{\k}{d}  \right)     +   \frac{\sigma_{\j}}{8} \right)
\\
\notag
&+&   O\left(  (j_1 d)^{ - \frac{n}{2}  - 1} \right),
\end{eqnarray}
where the implicit constant is independent of $d$, $\j$ and $\k$.

For $(u_1,\ldots, u_R)\in \R_{>0} \times \R_{\geq 0}^{R-1}$ we define the function
$$
\Psi_{\k; d}(u_1, u_2, \ldots, u_R) =
u_1^{\frac{n}{2}}  \sqrt{ | \det  H_{f_1 + \frac{u_2}{u_1}f_2 + \cdots + \frac{u_R}{u_1}f_R } ( \k / d ) |}.
$$
We note that $\Psi_{\k; d}( \, \cdot, u_2, \ldots, u_R) $ is a smooth function on
$\{u_1 \in \RR_{>0}: u_1 \geq u_r \,  (2 \leq r \leq R) \}$
for any fixed $u_2, \ldots, u_R \in \RR_{\geq 0}$.
Let
$$
\Psi_{\k; d}^{ (1) }  = \frac{\partial \Psi_{\k; d} }{ \partial u_1}.
$$
First we have
\begin{eqnarray}
\label{estimate I'1}
&& \left| \sum_{\substack{ 1 \leq j_1 \leq J  \\ 0 \leq  j_r \leq \min\{J_r, j_1\} \\ (2\leq r\leq R)    }} j_1^{n} I_0(d; \j; \k) \right|
\\
&\ll&
\notag
w \left( \frac{\k}{d} \right) d^{\frac{-n}{2}}
\sum_{ \substack{  0 \leq  j_r \leq  J_r \\ (2\leq r\leq R)}}
\left| \sum_{ \max\{1, j_2, \ldots, j_R  \} \leq j_1 \leq J }  \Psi_{\k; d}(j_1, \ldots,  j_R) e \left(  - d j_1 f_1( \k / d) \right) \right|
\\
\notag
&+&  O \left( \left(\prod_{r=2}^R J_r\right) J^{\frac{n}{2} } d^{- \frac{n}{2} - 1} \right).
\end{eqnarray}
We need the following lemma to estimate the above sum.
\begin{lem}
\label{bound''}
Let $u_1, \ldots, u_R \in \RR$ be such that $u_1 > 0$ and $0 \leq u_2, \ldots, u_R  \leq u_1.$
Then for any $\k \in \mathcal{K}_1$ we have 
\begin{eqnarray}
\notag
| \Psi_{\k; d} (u_1, \ldots,  u_R) | \ll u_1^{\frac{n}{2}}
\end{eqnarray}
and
\begin{eqnarray}
\notag
| \Psi_{\k; d}^{(1)} (u_1, \ldots,  u_R) | \ll u_1^{\frac{n}{2} - 1},
\end{eqnarray}
where the implicit constants are independent of $d$ and $\k$.
\end{lem}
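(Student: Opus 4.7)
\medskip

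The plan is to unfold the definition of $\Psi_{\k;d}$ into $u_1^{n/2}$ times the square root of $|\det M(u_1)|$ where
$$
M(u_1) = H_{f_1}(\k/d) + \sum_{r=2}^R \frac{u_r}{u_1} H_{f_r}(\k/d),
$$
and to exploit the fact that $\k \in \mathcal{K}_1$ forces $\k/d \in U \subseteq B_{\kappa}(\x_0) \subseteq B_{2\tau}(\x_0)$, while the hypothesis $0 \leq u_r \leq u_1$ forces the coefficient vector $(u_2/u_1, \ldots, u_R/u_1)$ to lie in $[0,1]^{R-1}$. This puts $M(u_1)$ exactly into the regime where the uniform bound (\ref{lowerbdd'}) applies, giving $c_1 \leq |\det M(u_1)| \leq c_2$ with constants independent of $d$ and $\k$. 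Combined with the obvious bound on the entries of $M(u_1)$, this immediately yields the first estimate $|\Psi_{\k;d}(u_1,\ldots,u_R)| \ll u_1^{n/2}$.

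For the derivative bound, I would differentiate through the product using Jacobi's formula. Writing $\Psi_{\k;d} = u_1^{n/2} \sqrt{|\det M(u_1)|}$, we have
$$
\Psi_{\k;d}^{(1)} = \frac{n}{2}\, u_1^{n/2-1} \sqrt{|\det M(u_1)|} \; + \; u_1^{n/2} \cdot \frac{\sqrt{|\det M(u_1)|}}{2}\,\mathrm{tr}\!\left( M(u_1)^{-1}\, \frac{\partial M}{\partial u_1}\right),
$$
using that $\frac{\partial}{\partial u_1}\det M = \det(M)\,\mathrm{tr}(M^{-1} \partial_{u_1} M)$ and that $\det M$ keeps constant sign on the domain in question (again by (\ref{lowerbdd'}) and continuity of eigenvalues). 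A direct computation shows
$$
\frac{\partial M}{\partial u_1} = -\frac{1}{u_1}\sum_{r=2}^{R} \frac{u_r}{u_1} H_{f_r}(\k/d),
$$
whose operator norm is $\ll 1/u_1$ since $u_r/u_1 \leq 1$ and the Hessians of $f_r$ are bounded uniformly on $B_{2\tau}(\x_0)$. Meanwhile $\|M(u_1)^{-1}\| \ll 1$ follows from the two-sided bound on $|\det M(u_1)|$ together with the uniform bound on the entries of $M(u_1)$ (via the cofactor formula for the inverse). Plugging these in, both terms are $\ll u_1^{n/2-1}$, giving the desired estimate.

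The main subtlety is keeping all implicit constants genuinely independent of $d$, $\k$, and the parameters $u_2, \ldots, u_R$; this is where the hypothesis $\k \in \mathcal{K}_1$ (which locates $\k/d$ inside the small set $U$) and the normalization $0 \leq u_r \leq u_1$ (which places the symbol $(u_2/u_1, \ldots, u_R/u_1)$ inside the compact set $[0,1]^{R-1}$) are essential, because only then does (\ref{lowerbdd'}) apply uniformly. No more delicate oscillatory-integral or stationary-phase input is needed; this is purely a uniform linear-algebra estimate, and I do not anticipate a substantive obstacle beyond bookkeeping.
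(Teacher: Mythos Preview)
Your argument is correct and follows essentially the same strategy as the paper's proof: both use that $\k/d\in U$ and $(u_2/u_1,\dots,u_R/u_1)\in[0,1]^{R-1}$ place the Hessian determinant in the regime of (\ref{lowerbdd'}), giving the first bound immediately, and both differentiate the product $u_1^{n/2}\sqrt{|\det M(u_1)|}$ and show the derivative of the determinant factor is $\ll 1/u_1$. The only cosmetic difference is in that last step---the paper expands $\det H_{f_1+x_2f_2+\cdots+x_Rf_R}(\k/d)$ as a polynomial in $x_2,\dots,x_R$ with uniformly bounded coefficients and differentiates term by term, whereas you invoke Jacobi's formula and bound $\|M(u_1)^{-1}\|$ via cofactors; both routes are equally valid.
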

\begin{proof}
The first estimate follows trivially from (\ref{lowerbdd'}).
Let
$$
\det H_{f_1 + x_2 f_2 + \cdots + x_R f_R } (\k/d)  = \sum_{  \substack{ 0 \leq  \ell_2 +  \cdots +  \ell_R \leq  n  \\  \ell_2, \ldots, \ell_R \geq 0 }  } A_{\ell_2, \ldots, \ell_R} x_2^{\ell_2} \cdots x_R^{\ell_R}.
$$
Since $\k/d \in U$ it follows that $| A_{\ell_2, \ldots, \ell_R} | \ll 1,$
where the implicit constant is independent of $d$ and $\k$.
For $u_1 > 0$ and $0 \leq u_2, \ldots, u_R  \leq u_1$, we have
\begin{eqnarray}
\notag
&&| \Psi_{\k; d}^{(1)} (u_1, \ldots,  u_R) |
\\
\notag
&\ll&
\frac{n}{2}  u_1^{\frac{n}{2} - 1 } \sqrt{ | \det  H_{ f_1 + \frac{u_2}{u_1}f_2 + \cdots + \frac{u_R}{u_1}f_R }(\k/d)|  }
\\
\notag
&+&
\frac{  u_1^{\frac{n}{2}  } } { 2 \sqrt{ | \det  H_{ f_1 + \frac{u_2}{u_1}f_2 + \cdots + \frac{u_R}{u_1}f_R  }(\k/d)|  } }  \sum_{  \substack{ 0 \leq  \ell_2 +  \cdots +  \ell_R \leq  n  \\  \ell_2, \ldots, \ell_R \geq 0 }  } |A_{\ell_2, \ldots, \ell_R}| \frac{u_2^{\ell_2} \cdots u_R^{\ell_R}}{u_1^{\ell_2 + \cdots + \ell_R + 1}}
\\
\notag
&\ll&
u_1^{\frac{n}{2} - 1}.
\end{eqnarray}
\end{proof}
With these estimates in hand, it follows by partial summation
\begin{eqnarray}
\notag
\left| \sum_{ \max\{1, j_2, \ldots, j_R  \} \leq j_1 \leq J }  \Psi_{\k; d}(j_1, \ldots,  j_R) e \left(  - d j_1 f_1( \k / d) \right) \right|
\ll  J^{ \frac{n}{2} }  \min \{ J,  \frac{1}{ \|  d  f_1( \k /d)   \| } \}.
\end{eqnarray}
Therefore, we obtain
\begin{eqnarray}\label{sumsumsum3}
M_1 &\ll& \frac{1}{D} \sum_{d=1}^D  \sum_{\k \in \mathcal{K}_1}   w \left( \frac{\k}{d} \right) d^{-\frac{n}{2}}
\left(\prod_{r=2}^R J_r\right)J^{ \frac{n}{2}  }  \min \{ J,  \frac{1}{ \|  d  f_1( \k /d)   \| } \}
\\
\notag
&+&
\frac{1}{D}  \sum_{d=1}^D  \sum_{\k \in \mathcal{K}_1} \left(\prod_{r=2}^R J_r\right) J^{\frac{n}{2} } d^{-\frac{n}{2} - 1}.
\end{eqnarray}
By a similar argument as in (\ref{cardK3}) we have
$$
\# \mathcal{K}_{1} \ll d^n.
$$
Thus the second term in (\ref{sumsumsum3}) can be bounded by
\begin{eqnarray}
\label{sumsumsum3'}
\left(\prod_{r=2}^R J_r\right) \frac{ J^{ \frac{n}{2}  } }{D}
  \sum_{d = 1 }^D \sum_{\substack{ \k \in \mathcal{K}_{1}  } }  d^{-\frac{n}{2} - 1}
  &\ll&
\left(\prod_{r=2}^R J_r\right)\frac{J^{\frac{n}{2} }   }{D}
\sum_{d = 1 }^D
d^{\frac{n}{2} - 1}
\\
\notag
&\ll& \left(\prod_{r=2}^R J_r\right) J^{\frac{n}{2}  } D^{\frac{n}{2} - 1}.
\end{eqnarray}
In order to estimate the first sum in (\ref{sumsumsum3}), we use the following result \cite[Theorem 2]{JJH}: For any $X > 0$ we have
\begin{eqnarray}
\sum_{d=1}^D
\sum_{\substack{ \k \in \ZZ^n \\ \| d f_1 \left(  \frac{\k}{d} \right)  \|  \leq  X^{-1} } }
w \left( \frac{\k}{d} \right)   \ll   X^{-1}D^{n+1} +  D^n \mathcal{E}_n(D),
\end{eqnarray}
where
$$
\mathcal{E}_n(D)
=
\begin{cases}
    \exp( \mathfrak{c}_3 \sqrt{\log D}) & \mbox{if } n = 2 \\
    (\log D)^{\mathfrak{c}_4} & \mbox{if } n \geq 3 \\
\end{cases}
$$
for some positive constants $\mathfrak{c}_3$ and $\mathfrak{c}_4.$ Here the constants $\mathfrak{c}_3$ and $\mathfrak{c}_4$ and the implicit constants  depend only on $n$, $c_1$ and $c_2$ in (\ref{lowerbdd'}), $\rho$ in (\ref{defnrho}), $\rho'$ in (\ref{defrho'}), and upper bounds for (the absolute values of) finitely many derivatives of $w$ and $f_1$ on $\mathcal{D}_+$. \par
By partial summation we have
\begin{eqnarray}
\label{partial sum++}
\sum_{ 1 \leq  d \leq D  }
\sum_{\substack{  \substack{  \k \in \ZZ^n  \\  \| d f_1 \left(  \frac{\k}{d} \right)  \|  \leq  X^{-1}   }   } } w \left( \frac{\k}{d} \right) d^{- \frac{ n}{2} }
\ll
D^{-\frac{n}{2}}  (X^{-1} D^{n+1} + D^n \mathcal{E}_n(D)).
\end{eqnarray}
Therefore, it follows that
\begin{eqnarray}
\label{partial sum+++}
&& \left(\prod_{r=2}^R J_r\right) \frac{J^{ \frac{n}{2}  }  }{D}
\sum_{1 \leq d \leq D}  \sum_{\substack{ \k \in \ZZ^n  } }  w \left( \frac{\k}{d} \right) d^{-\frac{n}{2}}  \min \{ J, \frac{1}{\| d f_1 \left(  \frac{\k}{d} \right)  \|} \}
\\
\notag
&\ll & \left(\prod_{r=2}^R J_r\right)  \frac{J^{ \frac{n}{2} + 1 }  }{D}
D^{-\frac{n}{2}}  (J^{-1} D^{n+1} + D^n \mathcal{E}_n(D))
\\
\notag
&+&
\left(\prod_{r=2}^R J_r\right)  \frac{J^{ \frac{n}{2}  }  }{D}
\sum_{1 \leq d \leq D}  \sum_{\substack{ \k \in \ZZ^n \\   J^{-1} <  \| d f_1 \left(  \frac{\k}{d} \right)  \| } }
w \left( \frac{\k}{d} \right) d^{-\frac{n}{2}}   \frac{1}{\| d f_1 \left(  \frac{\k}{d} \right)  \|}. 
\end{eqnarray}
In order to estimate the final term in (\ref{partial sum+++}), we split the interval $[J^{-1}, 1/2]$ into dyadic intervals.
Since the sum is an empty sum if $J^{-1}> 1/2$, we assume $J^{-1} \leq 1/2$.
Then by (\ref{partial sum++}) we have
\begin{eqnarray}
\label{partial sum++++}
&& \left(\prod_{r=2}^R J_r\right) \frac{J^{ \frac{n}{2}  }  }{D} \sum_{ 1 \leq d \leq D}  \sum_{ \substack{ \k \in  \ZZ^n  \\ J^{-1}  <  \|  d f_1(\k/d)  \|  } }
w \left(  \frac{\k}{d}  \right) d^{-\frac{n}{2}}
\left\|  d f_1 \left(  \frac{\k}{d}  \right)  \right\|^{-1}
\\
\notag
&\leq& \left(\prod_{r=2}^R J_r\right) \frac{J^{ \frac{n}{2}  }  }{D} \sum_{1 \leq i \leq \frac{\log J}{\log 2} + 1 } J 2^{1 - i}
\sum_{ 1 \leq d \leq D} d^{-\frac{n}{2}}
\sum_{ \substack{   \k \in \ZZ^n  \\
\frac{2^{i-1}}{J}  <  \|  d f_1(\k/d)  \| \leq   \frac{2^{i}}{J} } }
w \left(  \frac{\k}{d}  \right)
\\
\notag
&\ll & \left(\prod_{r=2}^R J_r\right) \frac{J^{ \frac{n}{2}  }  }{D} \sum_{1 \leq i \leq \frac{\log J}{\log 2} +  1} J 2^{1 - i}
 D^{-\frac{n}{2} }   (2^i J^{-1} D^{n+1} + D^n \mathcal{E}_n(D))
\\
&\ll&
\notag
\left(\prod_{r=2}^R J_r\right) \frac{J^{ \frac{n}{2}  }  }{D} \left( (\log J) D^{\frac{n}{2} + 1 } + J D^{\frac{n}{2} }  \mathcal{E}_n(D) \right).
\end{eqnarray}
Therefore, by combining (\ref{sumsumsum3}), (\ref{sumsumsum3'}), (\ref{partial sum+++}) and (\ref{partial sum++++}) we obtain
\begin{eqnarray}
\label{M1lastbdd}
M_1& \ll& \left(\prod_{r=2}^R J_r\right)\frac{J^{  \frac{n}{2} + 1} }{D}  D^{-\frac{n}{2}}  (J^{-1} D^{n+1} + D^n \mathcal{E}_n(D))
 \\
\notag
&+&  \left(\prod_{r=2}^R J_r\right) \frac{J^{  \frac{n}{2} } }{D}  \left( (\log J) D^{\frac{n}{2} + 1 } + J D^{\frac{n}{2} }  \mathcal{E}_n(D) \right) +  \left(\prod_{r=2}^R J_r\right) J^{\frac{n}{2}  } D^{\frac{n}{2} - 1}
\\
&\ll&
\notag
\left(\prod_{r=2}^R J_r\right) \left(
(\log J)J^{ \frac{n}{2}  } D^{ \frac{n}{2}} 
+ J^{ \frac{n}{2} + 1 } D^{ \frac{ n}{2} - 1} \mathcal{E}_n(D) \right).
\end{eqnarray}

\subsection{Final estimate}
Recall $D = \lfloor T/2 \rfloor$ and $T \geq 2$.
Combining  (\ref{MAIN++}), (\ref{M2bound}),  (\ref{M3bound}) and (\ref{M1lastbdd}) yields
\begin{eqnarray}
\notag
\mathcal{M}(J, T^{-1} ) 
&\ll & \left(\prod_{r=2}^R J_r\right)\frac{J^{n + 1}}{T}   
+ \left(\prod_{r=2}^R J_r\right)(\log J)J^{ \frac{n}{2}  } T^{ \frac{n}{2}}
\notag
\\
&+&  \left(\prod_{r=2}^R J_r\right)J^{ \frac{n}{2} + 1 } T^{ \frac{n}{2} - 1} \mathcal{E}_n(T).
\notag
\end{eqnarray}

Suppose $T^{-1} \leq J^{-1}$. Then we have
\begin{eqnarray}
\mathcal{M}(J, T^{-1} ) \leq
\mathcal{M}(J, J^{-1} )
\label{mainpropend1}
\ll
\left(\prod_{r=2}^R J_r\right) J^n (\mathcal{E}_n(J) + \log J).
\end{eqnarray}
On the other hand, if $T^{-1} > J^{-1}$, i.e. $J > T$, then
\begin{eqnarray}
\label{mainpropend2}
\mathcal{M}(J, T^{-1} )  \ll  \left(\prod_{r=2}^R J_r\right)\frac{J^{n + 1}}{T} +
\left(\prod_{r=2}^R J_r\right) J^n (\mathcal{E}_n(J) + \log J).
\end{eqnarray}
Finally, Proposition \ref{prop main} follows from (\ref{mainpropend1}) and (\ref{mainpropend2}).

\begin{rem} \label{REM1}
The positive constants $\mathfrak{c}_1$ and $\mathfrak{c}_2$ and the implicit constants in
the statement of Theorem \ref{main upper bound thm} depend only on
$n$, $R$, $c_1$ and $c_2$ in (\ref{lowerbdd'}), $\rho$ in (\ref{defnrho}), $\rho'$ in (\ref{defrho'})
for each $(r; \boldsymbol{\epsilon})$ $(1 \leq r \leq R, \boldsymbol{\epsilon} \in \{\pm 1\}^R)$
(Recall the remark made after (\ref{N^1}) and that the references given here for $c_1, c_2, \rho$ and
$\rho'$ are for the case $(r; \boldsymbol{\epsilon}) = (1; (1, \ldots, 1))$.) and upper bounds for (the absolute values of) finitely many derivatives of $w$ and $f_r$ $(2\leq r\leq R)$ on $B_{3 \tau /2} (\x_0)$. Also we may replace the assumption that $f_r \in C^{\ell}(\RR^n)$ with $f_r \in C^{\ell}(B_{\eta}(\x_0))$ $(1 \leq r \leq R)$
for any $\eta > 0$. 
\end{rem}

\section{Examples}\label{examples}

Our first example is based on the construction of certain matrices by A. A. Suslin in his work \cite{Sus} on
stably free modules. For a similar construction we refer the reader to see \cite[Section 14.2.3]{HKMS}. 

Example 1: Let $A_2 (t_1, t_2) = \begin{pmatrix}
       t_2 & t_1 \\
       t_1 & - t_2
     \end{pmatrix}$, and for $R \geq 3$ we let
$$
A_R(t_1, \ldots, t_R) = \begin{pmatrix}
       t_R I_{2^{R-2}}  & A_{R-1}(t_1, \ldots, t_{R-1}) \\
       A_{R-1}(t_1, \ldots, t_{R-1}) & - t_R I_{2^{R-2}}
     \end{pmatrix},
$$
where $I_{m}$ denotes the $m \times m$ identity matrix.
Then
$$
A_R(t_1, \ldots, t_R) = t_1 A_R(1, 0, \ldots, 0) + t_2 A_R(0, 1, 0 \ldots, 0) + \cdots + t_R A_R(0,  \ldots, 0, 1),
$$
and $A_R(1, 0, \ldots, 0),$ $A_R(0, 1, 0 \ldots, 0),$ $\ldots, A_R(0,  \ldots, 0, 1)$ are real symmetric matrices.
\\

Claim: For each $R \geq 2$, $\det A_R(t_1, \ldots, t_R) \neq 0 $
for all $(t_1, \ldots, t_R) \in \RR^R \backslash \{ \mathbf{0} \}$.
\begin{proof}
We prove by induction that
$$
A_{R}(t_1, \ldots, t_{R})^2 = (t_1^2 + \cdots + t_{R}^2) I_{2^{R-1}}.
$$
Then it follows that
$$
(\det A_{R}(t_1, \ldots, t_{R}))^2 = (t_1^2 + \cdots + t_{R}^2)^{2^{R-1}},
$$
and the result is immediate.
For the base case $R=2$ we have
$$
A_2(t_1, t_2)^2 = \begin{pmatrix}
       t_2 & t_1 \\
       t_1 & - t_2
     \end{pmatrix}^2
     =
     \begin{pmatrix}
       t_1^2 + t_2^2 & 0 \\
       0 & t_1^2 +  t_2^2
     \end{pmatrix}.
$$
Suppose the statement holds for some $R \geq 2$.
Then it follows that
\begin{eqnarray}
&&A_{R+1}(t_1, \ldots, t_{R+1})^2
\notag
\\
&=&  \begin{pmatrix} t_{R+1}^{2} I_{2^{R-1}} + A_R(t_1, \ldots, t_R)^2 & 0 \\  0 &  t_{R+1}^{2} I_{2^{R-1}} + A_R(t_1, \ldots, t_R)^2 \end{pmatrix}
\notag
\\
\notag
&=& (t_1^2 + \cdots + t_{R+1}^2) I_{2^{R}}.
\end{eqnarray}

\end{proof}

The next construction stems from the field of \textit{determinantal representation} (see for example \cite{GKKP, NT, Q}), where given a polynomial $g(x_1, \ldots, x_R) \in \RR[x_1, \ldots, x_R]$ one seeks  to find
$n \times n$ real matrices $H_0, \ldots, H_R$ such that
$$
g(x_1, \ldots, x_R) = \det \left( H_0 + \sum_{i=1}^R x_i H_i \right).
$$
Even though we have an additional restriction that $H_0$ is the zero matrix, we can nevertheless
make use of the techniques developed in this area to find further examples.

Example 2: Let $R \geq 2$ and $n = 2^{\lfloor R / 2 \rfloor - 1}$.
It follows from \cite[Theorem 5.3]{NT} (note the assumption on \cite[pp.1580]{NT}) that
there exist $2n \times 2n$ Hermitian matrices $M_1, \ldots, M_R$ such that
$$
\det( I_{2n} + x_1 M_1 + \cdots + x_R M_R) = (1 - (x_1^2 + \cdots  + x_R^2 ))^n.
$$
We refer the reader to \cite[Example 4.5]{NT}
for an explicit example of such $M_1, \ldots, M_R$. Let $\epsilon > 0$.
Then by replacing $x_i$ with $\frac{x_i}{\epsilon}$, and multiplying both sides by $\epsilon^{2n}$, the above equation becomes
$$
\det ( \epsilon I_{2n} +  x_1 M_1 + \cdots + x_R M_R) = ( \epsilon^2 - (x_1^2 + \cdots + x_R^2 ))^n.
$$
Noting that both sides are polynomials in $\epsilon, x_1, \ldots, x_R$, by taking the limit $\epsilon \to 0$ we obtain  $\det( x_1 M_1 + \cdots + x_R M_R) =(-1)^n (x_1^2 + \cdots  + x_R^2 )^n$. 
Finally, by \cite[Lemma 2.14]{NT} we obtain $4n \times 4n$ real symmetric matrices $H_1, \ldots, H_R$ such that
$$
\det( x_1 H_1 + \cdots + x_R H_R) = (x_1^2 + \cdots  + x_R^2 )^{2n}.
$$

\end{document}